\documentclass{amsart}

\usepackage[ascii]{inputenc}
\usepackage[ngerman,USenglish]{babel}
\usepackage{microtype}
\usepackage{amsmath, amssymb}
\usepackage{amsfonts}
\usepackage{amsthm}
\usepackage{aliascnt}
\usepackage{tikz}
\usepackage[bookmarksnumbered]{hyperref}
\usepackage[all]{hypcap}

\title[Generating Functions and Symmetric Cones]{Lattice~Point Generating~Functions and Symmetric Cones}

\author{Matthias Beck}
\address{Department of Mathematics\\
         San Francisco State University\\
         San Francisco, CA 94132}
\email{mattbeck@sfsu.edu}

\author{Thomas Bliem}
\address{Kempener Str.\ 57\\
         50733 K\"oln\\
         Germany}
\email{tbliem@gmx.de}

\author{Benjamin Braun}
\address{Department of Mathematics\\
         University of Kentucky\\
         Lexington, KY 40506--0027}
\email{benjamin.braun@uky.edu}

\author{Carla Savage}
\address{Department of Computer Science\\
         North Carolina State University \\
         Raleigh, NC 27695-8206}
\email{savage@ncsu.edu}

\date{}

\urlstyle{same}

\allowdisplaybreaks[4]

\newcommand{\newjointcountertheorem}[3]{
	\newaliascnt{#1}{#2}
	\newtheorem{#1}[#1]{#3}
	\aliascntresetthe{#1}	
}

\newtheorem{theorem}{Theorem}[section]
\newjointcountertheorem{proposition}{theorem}{Proposition}
\newjointcountertheorem{lemma}{theorem}{Lemma}
\newjointcountertheorem{corollary}{theorem}{Corollary}
\theoremstyle{definition}
\newjointcountertheorem{definition}{theorem}{Definition}
\newjointcountertheorem{example}{theorem}{Example}
\newjointcountertheorem{remark}{theorem}{Remark}

\newcommand{\des}{\mathrm{des}} 
\newcommand{\maj}{\mathrm{maj}} 
\newcommand{\comaj}{\mathrm{comaj}} 
\renewcommand{\phi}{\varphi} 
\renewcommand{\epsilon}{\varepsilon} 
\newcommand{\definedterm}{\textbf} 
\newcommand{\defeq}{:=} 
\newcommand{\Ehr}{\mathrm{Ehr}} 
\newcommand{\Dr}{D_\mathrm{r}} 
\DeclareMathOperator{\stat}{stat}
\DeclareMathOperator{\cobin}{cobin}

\setlength{\marginparwidth}{1.5in}

\begin{document}

\begin{abstract}
	We show that a recent identity of Beck--Gessel--Lee--Savage on the generating function of symmetrically contrained compositions of integers generalizes naturally to a family of convex polyhedral cones that are invariant under the action of a finite reflection group.
	We obtain general expressions for the multivariate generating functions of such cones, and work out the specific cases of a symmetry group of type A (previously known) and types B and D (new).
	We obtain several applications of the special cases in type B, including identities involving permutation statistics and lecture hall partitions.
\end{abstract}

\maketitle

\section{Introduction}

	Motivated by the ``constrained compositions'' introduced by Andrews--Paule--Riese
	\cite{PAVII}, Beck--Gessel--Lee--Savage \cite{BeckGesselLeeSavage} enumerated
	\definedterm{symmetrically constrained compositions}, i.e., compositions
	of an integer $M$ into $n$ nonnegative parts
	\[
		M = \lambda_1 + \lambda_2 + \cdots + \lambda_n \, ,
	\]
	where the sequence $\lambda\defeq\left( \lambda_1, \lambda_2, \dots, \lambda_n
	\right)$ satisfies the symmetric system of linear inequalities
	\[
		a_1 \lambda_{ \pi(1) } + a_2 \lambda_{ \pi(2) } + \dots + a_n \lambda_{ \pi(n) } \ge 0
		\qquad \text{ for all } \pi \in S_n \, .
	\]
	Specifically, \cite{BeckGesselLeeSavage} discusses various approaches to compute, for a fixed
	set of parameters $a_1, a_2, \dots, a_n$, the generating functions
	\[
		F \left( z_1, z_2, \dots, z_n \right) \defeq \sum_\lambda z_1^{ \lambda_1 }
	z_2^{ \lambda_2 } \cdots z_n^{ \lambda_n }
	\]
	and
	\[
		F(q) \defeq F(q,q,\dots,q) = \sum_\lambda q^{ \lambda_1 + \lambda_2 + \dots + \lambda_n } ,
	\]
	where both sums extend over all symmetrically constrained compositions $\lambda$.
	One viewpoint of \cite{BeckGesselLeeSavage} is geometric: The
	compositions $\left( \lambda_1, \lambda_2, \dots, \lambda_n \right)$
	are interpreted as integer lattice points in the cone
	\begin{equation}\label{coneq}
		\left\{ x \in \mathbf{R}^n \mid \forall \, \sigma \in W : (\sigma x, a) \ge 0 \right\} ,
	\end{equation}
	where $W$ is the image of the permutation representation of $S_n$,
	$a = (a_1, \ldots, a_n)$,
	and $(\;\;{,}\;\;)$ is the standard inner product on $\mathbf{R}^n$.
	This viewpoint together with permutation statistics of $S_n$ gave rise
	to explicit (and in some instances surprising) generating function
	formulas.

	Our goal is to generalize the results in \cite{BeckGesselLeeSavage} to cones of
	the form \eqref{coneq} where $W$ is another reflection group.
	In addition to obtaining general multivariate generating function identities, we obtain several applications of these results for hyperoctahedral groups.
	These applications are similar in spirit to the applications in the symmetric-group case found in \cite{BeckGesselLeeSavage}.

	The outline of our paper is as follows.
	The general setup for our approach is discussed in the next section, which also contains our central result, \autoref{vbvXkhNC}. 
	Section~\ref{AKg4ecYc} illustrates our approach by re-deriving the main result in \cite{BeckGesselLeeSavage}.
	Sections~\ref{dxuwjrHR} and~\ref{XC4vSQKD} consider cones constrained by reflection groups of type $B$ and $D$, respectively.
	Further, Section~\ref{dxuwjrHR} contains applications obtained through specializing our generating functions in the type-$B$ case.

	\bigskip

	\emph{Acknowledgements}.
		M.B. is partially supported by the NSF (DMS-0810105).
		T.B. has been supported by the \foreignlanguage{ngerman}{Deutsche Forschungsgemeinschaft} (SPP 1388).
		B.B. is partially supported by the NSF (DMS-0758321).
		The authors thank the American Institute of Mathematics for support of our SQuaRE working group on ``Polyhedral Geometry and Partition Theory.'' 

	\bigskip
	
	\emph{2010 Mathematics Subject Classification}.
		05A15 Exact enumeration problems, generating functions;
		51F15 Reflection groups, reflection geometries;
		05A17 Partitions of integers;
		52B15 Symmetry properties of polytopes.

	\bigskip

	\emph{Keywords}.
		lattice point generating function,
		polyhedral cone,
		finite reflection group,
		Coxeter group,
		symmetrically constrained composition,
		permutation statistics,
		lecture hall partition.

\section{General theory} \label{sH4iHn8E}

	Our goal in this section is to study integer points in cones that are constrained by the orbit of a single linear constraint under an appropriate group action on real space.
	This goal is realized in \autoref{vbvXkhNC}, where the multivariate generating function encoding the integer points in such a cone is expressed as a sum of simpler generating functions.
	\autoref{vbvXkhNC} is an algebraic consequence of a geometric triangulation of the symmetric cone, which we obtain in \autoref{8UQHag7u}.
	\autoref{JNSFSTk5} makes the triangulation disjoint by using combinatorics of Coxeter groups as a tiebreaker for the walls separating the maximal cones in the triangulation.
	This is critical for our subsequent applications.

\subsection{Almost irreducible finite reflection groups, Coxeter groups, and descents}

	In the following, we will consider finite reflection groups (see, e.g., \cite{Humphreys} for background) that act on spaces in a restricted fashion.
	Namely, a finite reflection group $W \subset \mathrm{O}(V)$ acting on a Euclidean vector space $V$ is called \definedterm{almost irreducible} if $V$ decomposes into $W$-invariant subspaces $V = V_1 \oplus V_2$ such that $W$ acts irreducibly and nontrivially on $V_1$ and trivially on $V_2$, and that $V_2$ is $1$-dimensional.

	\begin{example}
		$S_n$ acts almost irreducibly on $\mathbf{R}^n$ by permutation of the components.
		The irreducible summand consists of all vectors with component sum $0$, and the trivial summand consists of all vectors with equal components.
		This is the case considered in \cite{BeckGesselLeeSavage}. 
	\end{example}

	\begin{example}
		Let $V_1$ be a Euclidean vector space and $W \subset \mathrm{O}(V_1)$ a nontrivial irreducible reflection group, i.e., a nontrivial reflection group such that $V_1$ does not contain any nontrivial proper $W$-invariant subspaces.
		Let $W$ act trivially on $\mathbf{R}$ and set $V = V_1 \oplus \mathbf{R}$.
		Then $W$ acts almost irreducibly on $V$.
	\end{example}

	A \definedterm{Coxeter group} of rank $r$ is a group admitting a presentation with generators $s_1, \ldots, s_r$ and relations $(s_js_k)^{m_{jk}} = 1$ for $m_{jk} \in \{1, 2, 3, \ldots \} \cup \{ \infty \}$ subject to the conditions that $m_{jk} = m_{kj}$ and $m_{jk} = 1 \iff j=k$.
	Here, a value of $m_{jk} = \infty$ is to be understood as the absence of the corresponding relation.
	Such generators are called \definedterm{simple generators}.
	For each Coxeter group considered, we will suppose that simple generators have been fixed once and for all.
	We refer the reader to \cite{BjornerBrenti} or \cite{Humphreys} for further information about Coxeter groups and their relation to reflection groups.

	The \definedterm{length} $l(\sigma)$ of an element $\sigma \in W$ of a Coxeter group $W$ is the smallest integer such that there is a decomposition $\sigma = s_{j_1} \cdots s_{j_{l(\sigma)}}$ of $\sigma$ as a product of $l(\sigma)$ not necessarily distinct simple generators.
	For any $\sigma \in W$, the \definedterm{right descent set} of $\sigma$ is
	\begin{equation} \label{iz7umNWf}
		\Dr(\sigma) \defeq \{ j \in \{ 1, \ldots, r \} \mid l(\sigma s_j) < l(\sigma) \} \, .
	\end{equation}

	\begin{remark}
	Propositions~\ref{prop:typeAdes}, \ref{prop:typeBdes}, and~\ref{prop:typeDdes} review the connection between the definition of descent given here and definitions of descent for Coxeter groups of types $A$, $B$, and $D$ in terms of the one-line notation.
	\end{remark}

	Recall that if $W$ is a finite reflection group, it is automatically a Coxeter group.
	Simple generators can be found as follows.
	Let $\mathcal{H}$ be the union of all reflection hyperplanes for $W$; denote by $F$ the closure of a connected component in $V\setminus \mathcal{H}$.
	It is immediate that $F$ is a convex polyhedral cone.
	Let $H_1, \ldots, H_r$ be the facet hyperplanes of $F$ and let $s_i$ be the reflection at $H_i$.
	Then $s_1, \ldots, s_r$ are simple generators of the Coxeter group $W$.
	See \cite[V.3.2, Th.~1]{Bourbaki} for the proof of these statements.

	A subset $F \subset V$ is a \definedterm{fundamental domain} for $W$ if $F$ is the closure of an open set and each $W$-orbit intersects $F$ in exactly one point.
	By \cite[Section I.12]{Humphreys}, every such $F$ is polyhedral, and is bounded by hyperplanes fixed by a set of simple reflections in $W$.
	Through the rest of this paper, when given a set of simple generators $s_1,\ldots,s_r$ of a reflection group $W$, we denote by $F$ a fixed fundamental domain with bounding hyperplanes corresponding to $s_1,\ldots,s_r$.
	
\subsection{Triangulations of monoconditional cones}

	Denote the value of a linear form $\phi \in V^*$ on a vector $x \in V$ by $\langle x, \phi \rangle$.
	Let $W \subset \mathrm{O}(V)$ be an almost irreducible reflection group.
	A \definedterm{symmetric cone} $C \subset V$ is a convex polyhedral cone that is $W$-invariant.
	A symmetric cone is called \definedterm{monoconditional} if there is a linear form $\phi \in V^*$, such that $V_1, V_2 \not\subset \ker(\phi)$ and
	\begin{equation} \label{BEyS3CWg}
		C = \{ x \in V \mid \forall \, \sigma \in W : \langle \sigma x, \phi \rangle \geq 0 \} \, .
	\end{equation}
	This generalizes \eqref{coneq}.
		
	\begin{example}
		The positive orthant $\mathbf{R}_{\geq 0}^n$ 
	is a monoconditional symmetric cone for the almost irreducible action of $S_n$ on $\mathbf{R}^n$ by permutation of the components.
		A possible linear form defining it is the projection on the first component.
	\end{example}

	\begin{lemma} \label{8UQHag7u}
		Let $W \subset \mathrm{O}(V)$ be an almost irreducible reflection group.
		Let $C \subset V$ be a monoconditional symmetric cone.	
		Let $F \subset V$ be a fundamental domain for the action of $W$ on $V$.
		Then the cone $C_+ \defeq C \cap F$ is simplicial.
		In particular, $C$ admits the triangulation
		\[
			C = \bigcup_{\sigma \in W} \sigma \, C_+ \, .
		\]
	\end{lemma}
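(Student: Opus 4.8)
The plan is to show that on the fundamental domain $F$ the entire defining system of $C$ collapses to a single linear inequality, so that $C_+$ turns out to be cut out by $\dim V$ halfspaces with linearly independent normals. First I would record the shape of $F$. Since $W$ fixes $V_2$ pointwise, every reflection hyperplane contains $V_2$, so the chambers of $W$ on $V$ are products of the chambers of the essential irreducible action on $V_1$ with the line $V_2$. Thus $F$ is the dominant chamber $F_1 \subset V_1$ extended by $V_2$, and $F = \{ x \in V \mid (x, \alpha_i) \geq 0 \text{ for } i = 1, \dots, r \}$ for the $r \defeq \dim V_1$ simple roots $\alpha_1, \dots, \alpha_r \in V_1$ associated to $s_1, \dots, s_r$. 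Using the Euclidean inner product $(\,\cdot\,,\,\cdot\,)$ on $V$ I would represent $\phi$ by a vector $a \in V$ with $\langle x, \phi \rangle = (x, a)$ and decompose $a = a_1 + a_2$ along $V_1 \oplus V_2$; the hypotheses $V_1, V_2 \not\subset \ker \phi$ say precisely that $a_1 \neq 0$ and $a_2 \neq 0$. Since $W \subset \mathrm{O}(V)$, we may rewrite $C = \{ x \in V \mid (x, \sigma a) \geq 0 \text{ for all } \sigma \in W \}$.

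The key step is to simplify this system on $F$. For $x = x_1 + x_2 \in F$ one has $(x, \sigma a) = (x_1, \sigma a_1) + (x_2, a_2)$, because $V_1 \perp V_2$ and $W$ fixes $a_2$; hence the condition that $(x, \sigma a) \geq 0$ for all $\sigma$ is equivalent to $\min_{\sigma \in W}(x_1, \sigma a_1) + (x_2, a_2) \geq 0$. Here I would invoke the standard reflection-group fact that, for $x_1$ in the closed dominant chamber $F_1$, the pairing $\sigma \mapsto (x_1, \sigma a_1)$ is minimized at the unique anti-dominant representative $a_1^-$ of the orbit $W a_1$ (if some $w a_1$ fails to be anti-dominant, i.e.\ $(w a_1, \alpha_i^\vee) > 0$ for a simple root $\alpha_i$, then replacing it by $s_i w a_1$ does not increase the pairing with the dominant $x_1$). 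Consequently $\min_\sigma (x_1, \sigma a_1) = (x_1, a_1^-)$ is itself \emph{linear} in $x$, and on $F$ the whole system collapses to the single inequality $(x, a^-) \geq 0$ with $a^- \defeq a_1^- + a_2$. I expect this reduction to be the main obstacle, as it is the one place where the geometry of the orbit, rather than routine bookkeeping, enters.

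It then remains to assemble the conclusion. We obtain $C_+ = C \cap F = \{ x \in V \mid (x, \alpha_i) \geq 0 \ (i = 1, \dots, r),\ (x, a^-) \geq 0 \}$, an intersection of $r + 1 = \dim V$ halfspaces. Their normals $\alpha_1, \dots, \alpha_r, a^-$ are linearly independent, since the $\alpha_i$ form a basis of $V_1$ while $a^-$ has nonzero $V_2$-component $a_2$ and hence lies outside $V_1$; therefore $C_+$ is a full-dimensional pointed simplicial cone. Finally, because $F$ is a fundamental domain we have $V = \bigcup_{\sigma \in W} \sigma F$ with pairwise disjoint interiors, and $C$ is $W$-invariant, so $C = \bigcup_\sigma (C \cap \sigma F) = \bigcup_\sigma \sigma (C \cap F) = \bigcup_\sigma \sigma C_+$. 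Each $\sigma C_+$ is simplicial, being an orthogonal image of $C_+$, and the maximal cones meet only along their boundaries because $\operatorname{int}(C_+) \subseteq \operatorname{int}(F)$; this yields the asserted triangulation, with the half-open refinement that makes the union genuinely disjoint deferred to \autoref{JNSFSTk5}.
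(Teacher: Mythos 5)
Your proof is correct and follows essentially the same route as the paper's: both arguments reduce the $W$-orbit of inequalities, restricted to the chamber $F$, to the single inequality given by the (anti-)dominant representative of the orbit of the defining linear form, using the standard fact that the pairing of a dominant vector against an orbit is extremized at the (anti-)dominant element, and then observe that one extra inequality with normal outside $V_1$ cuts a simplicial cone out of $F$. The paper packages this via the cross-section polytope $P_+$ and the hyperplane-counting characterization of length rather than simple roots, but the substance is identical.
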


	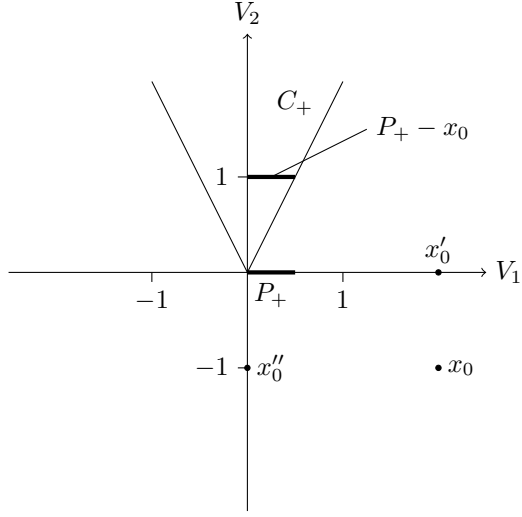
\begin{figure}
		\centering
		\begin{tikzpicture}[x=.5in,y=.5in]
			\draw[->] (-2.5,0) -- (2.5,0) node[right]{$V_1$};
			\draw[->] (0,-2.5) -- (0,2.5) node[above]{$V_2$};
			\draw (1,0) -- (1,-.1) node[below]{$1$};
			\draw (-1,0) -- (-1,-.1) node[below]{$-1$};
			\draw (0,1) -- (-.1,1) node[left]{$1$};
			\draw (0,-1) -- (-.1,-1) node[left]{$-1$};
			\draw (-1,2) -- (0,0) -- (1,2);
			\draw (.5,2) node[below]{$C_+$};
			\filldraw (2,-1) circle(1bp) node[right]{$x_0$};
			\filldraw (2,0) circle(1bp) node[above]{$x'_0$};
			\filldraw (0,-1) circle(1bp) node[right]{$x''_0$};
			\draw[ultra thick] (0,0) -- (.5,0) (.25,0) node[below]{$P_+$};
			\draw[ultra thick] (0,1) -- (.5,1);
			\draw (.25,1) -- (1.25,1.5) node[right]{$P_+ - x_0$};
		\end{tikzpicture}
		\caption{Notation used in the proof of \autoref{8UQHag7u}. The cyclic group of order $2$ acts almost irreducibly on $\mathbf{R}^2$ by sign change in the first component.}
		\label{5y7q4McK}
	\end{figure}

	\begin{proof}
		Some of the notation used in this proof is shown in \autoref{5y7q4McK} for convenience.
		Let $\phi \in V^*$ be a linear form defining $C$ as in \eqref{BEyS3CWg}.
		Let $x_\phi \in V$ such that $\langle x, \phi \rangle = -(x, x_\phi)$ for all $x \in V$.
		Let $x_0$ be the unique element of $Wx_\phi \cap F$.
		Then
		\[
			C_+ = \{ x \in F \mid \forall \, \sigma \in W : (\sigma x, x_0) \leq 0 \} \, .
		\]
		Let $V = V_1 \oplus V_2$ be the decomposition of $V$ into the irreducible and trivial component.
		Let $x_0 = x_0' + x_0''$ with $x_0' \in V_1$ and $x_0'' \in V_2$.
		By definition $V_2 \not\subset \ker(\phi)$, and so $x_\phi \not\in V_1$, thus $x_0 \not\in V_1$, and hence $x_0'' \neq 0$.
		As $\phi$ is only determined up to multiplication by a positive scalar, suppose without loss of generality that $(x_0'', x_0'') = 1$.
		Let
		\[
			P_+ = \{ x \in F \cap V_1 \mid \forall \, \sigma \in W : (\sigma x, x_0') \leq 1 \} \, .
		\]

		Consider the reflections at the facet hyperplanes of $F$ as simple generators of $W$.
		Let $l$ denote the corresponding length function.
		Let $x \in F \cap V_1$ and $\sigma \in W$.
		Let $H$ be a facet hyperplane of $\sigma F$, such that $l(s \sigma) > l(\sigma)$ for the reflection $s$ at $H$.
		We claim that in this situation
		\begin{equation} \label{oc4mKkWo}
			(s \sigma x, x_0') \leq (\sigma x, x_0') \, .
		\end{equation}
		Indeed, consider the decomposition $V_1 = (H \cap V_1) \oplus H^\perp$.
		According to this decomposition, write $x_0' = v_0 + w_0$ and $\sigma x = v_1 + w_1$;
		then $s \sigma x = v_1 - w_1$.
		We have $(\sigma x, x_0') = (v_1, v_0) + (w_1, w_0)$ and  $(s \sigma x, x_0') = (v_1, v_0) - (w_1, w_0)$.
		Hence
		\begin{equation} \label{zmBvrJsi}
			(s \sigma x, x_0') = (\sigma x, x_0') - 2(w_1,w_0) \, .
		\end{equation}
		Generally, if $\tau \in W$, then $l(\tau)$ equals the number of reflection hyperplanes between $F$ and $\tau F$.
		As $l(s \sigma x) > l(\sigma x)$, this implies that $x_0$ and $\sigma x$ lie on the same side of $H$.
		Hence $(w_1, w_0) \geq 0$, and so \eqref{zmBvrJsi} implies the claim \eqref{oc4mKkWo}.

		By induction on $l(\sigma)$, \eqref{oc4mKkWo} implies that
		\[
			P_+ = \{ x \in F \cap V_1 \mid (x, x_0') \leq 1 \} \, . \]
		The cone $C_+$ is the cone over $P - x_0''$, thus $C_+ = \{ x \in F \mid (x, x_0) \leq 0 \}$.
		The cone $F \cap V_1$ is a fundamental domain for the action of $W$ on $V_1$.
		Hence $F \cap V_1$ is simplicial.
		We have $F = (F \cap V_1) + V_2$, and so $\dim(F) = \dim(F \cap V_1) + 1$.
		The cone $C_+$ is defined in $F$ by the single additional inequality $(x, x_0) \leq 0$, thus $C_+$ is simplicial.
	\end{proof}

	Consider the situation of \autoref{8UQHag7u}.
	Choose an order $H_1, \ldots, H_{n-1}$ of the facet hyperplanes of $F$ with the corresponding simple reflections $s_1, \ldots, s_{n-1}$.
	For any subset $J \subset \{1, \ldots, n-1\}$, let
	\[
		C_J \defeq C_+ \setminus \bigcup_{j \in J} H_j \, .
	\]
	
	For example, $C_\emptyset = C_+$.
	If $J \neq \emptyset$ some of the facets of $C_+$ are removed.
	
	\begin{proposition} \label{JNSFSTk5}
		In the situation of \autoref{8UQHag7u}, $C$ decomposes as a disjoint union
		\[
			C = \bigcup_{\sigma \in W} \sigma \,C_{\Dr(\sigma)} \, .
		\]
	\end{proposition}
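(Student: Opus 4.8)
The plan is to show that the stated union both covers $C$ and is disjoint, by reducing everything to the single assertion that each point of $C$ lies in $\sigma\,C_{\Dr(\sigma)}$ for exactly one $\sigma \in W$. By \autoref{8UQHag7u} we already know $C = \bigcup_{\sigma \in W} \sigma\,C_+$, and since $C$ is $W$-invariant we have $\sigma\,C_+ = \sigma(C \cap F) = C \cap \sigma F$; thus the maximal cones of the triangulation are exactly the intersections of $C$ with the Coxeter chambers $\sigma F$. Because $C_{\Dr(\sigma)} \subseteq C_+ \subseteq C$ and $\sigma C \subseteq C$, each $\sigma\,C_{\Dr(\sigma)}$ is contained in $C$, so it suffices to prove that for every $x \in C$ there is precisely one $\sigma \in W$ with $x \in \sigma\,C_{\Dr(\sigma)}$. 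Unwinding the definition of $C_J \defeq C_+ \setminus \bigcup_{j \in J} H_j$, membership $x \in \sigma\,C_{\Dr(\sigma)}$ means that $\sigma^{-1} x \in F$ and that $\sigma^{-1} x \notin H_j$ for every $j \in \Dr(\sigma)$.

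First I would pin down which chambers contain a fixed $x \in C$. Let $\bar x$ be the unique point of the orbit $Wx$ lying in the fundamental domain $F$, write $\bar x = \sigma_0^{-1} x$, and let $I \defeq \{ j : \bar x \in H_j \}$ record the facets of $F$ passing through $\bar x$. Standard fundamental-domain theory (\cite[Section I.12]{Humphreys}) identifies the stabilizer of $\bar x$ with the standard parabolic subgroup $W_I = \langle s_j : j \in I \rangle$. Since each $W$-orbit meets $F$ in a single point, the requirement $\sigma^{-1} x \in F$ forces $\sigma^{-1} x = \bar x$, which is in turn equivalent to $\sigma_0^{-1}\sigma \in W_I$, i.e. $\sigma \in \sigma_0 W_I$. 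For every such $\sigma$ we have $\sigma^{-1} x = \bar x$, so the set of facets of $F$ containing $\sigma^{-1}x$ is exactly $I$, and the remaining condition $\sigma^{-1} x \notin H_j$ for all $j \in \Dr(\sigma)$ collapses to the purely combinatorial requirement $\Dr(\sigma) \cap I = \emptyset$.

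It then remains to count, inside the single coset $\sigma_0 W_I$, the elements $\sigma$ with $\Dr(\sigma) \cap I = \emptyset$. This is precisely the characterization of minimal-length coset representatives: by definition of the right descent set, $\Dr(\sigma) \cap I = \emptyset$ says $l(\sigma s_j) > l(\sigma)$ for all $j \in I$, which holds if and only if $\sigma$ is the unique shortest element of the coset $\sigma W_I$; each left coset contains exactly one such element \cite{BjornerBrenti}. Hence exactly one $\sigma \in \sigma_0 W_I$ survives, which yields the desired uniqueness and simultaneously establishes both the covering and the disjointness.

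The step I expect to be the crux — and the one that genuinely uses the Coxeter-theoretic input rather than convex geometry — is the dictionary set up in the second paragraph: that the chambers through $x$ are exactly the coset $\sigma_0 W_I$ governed by the parabolic $W_I$ attached to the facets through $\bar x$, so that deleting the facets indexed by $\Dr(\sigma)$ corresponds precisely to selecting minimal coset representatives. Once this correspondence is in place, the minimal-coset-representative theorem does the rest, and no further case analysis of lower-dimensional faces is needed, since the descent condition is applied uniformly at the level of whole facet hyperplanes $H_j$.
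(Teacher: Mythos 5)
Your proof is correct, and it reaches the same combinatorial heart of the matter (minimal-length coset representatives for standard parabolic subgroups) by a somewhat different and more structural route than the paper. The paper works directly with the set $W(x) = \{\sigma \in W \mid x \in \sigma\, C_+\}$: it takes the unique minimal-length element $\sigma_x$ of that set and runs two contradiction arguments, using in both directions the geometric observation that $s_j$ fixes $H_j$ pointwise (so that $\sigma^{-1}x \in H_j$ is equivalent to both $\sigma$ and $\sigma s_j$ lying in $W(x)$); it never explicitly identifies $W(x)$ as a coset, only citing \cite[\S 1.10]{Humphreys} for the existence of a unique minimal element and for the descent-deletion step. You instead make the coset structure explicit: you invoke the stabilizer theorem (\cite[Section I.12]{Humphreys}, that the isotropy group of a point of $F$ is the standard parabolic $W_I$ generated by the simple reflections through it) to identify $W(x)$ with $\sigma_0 W_I$, translate membership in $\sigma\, C_{\Dr(\sigma)}$ into the purely combinatorial condition $\Dr(\sigma) \cap I = \emptyset$, and finish with the uniqueness of the representative in $W^I \cap \sigma_0 W_I$. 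This buys a single ``exactly one $\sigma$'' statement that delivers covering and disjointness simultaneously, at the price of an extra input (the stabilizer theorem) that the paper's more hands-on argument avoids. Both arguments are complete; just make sure, when writing this up, to note explicitly that $\sigma^{-1}x \in C$ automatically by $W$-invariance, so that $\sigma^{-1}x \in C_+$ reduces to $\sigma^{-1}x \in F$ --- you use this implicitly when you write $\sigma\, C_+ = C \cap \sigma F$.
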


	\begin{proof}
		For $x \in C$, let
		\[
			W(x) \defeq \{ \sigma \in W \mid x \in \sigma \, C_+ \} \, .
		\]
		By \autoref{8UQHag7u}, $C = \bigcup_{\sigma \in W} \sigma \, C_+$, and so the set $W(x)$ is nonempty.
		It contains a unique element of minimal length \cite[\S 1.10]{Humphreys}, denoted by $\sigma_x$.

		Assume that $\sigma_x^{-1}x \in H_j$ for some $j \in \Dr(\sigma_x)$.
		Then $s_j \sigma_x^{-1} x = \sigma_x^{-1}x$, and so $x = \sigma_x s_j \sigma_x^{-1} x$.
		As $\sigma_x^{-1} x \in C_+$, this implies that $\sigma_x s_j \in W(x)$.
		On the other hand $l(\sigma_x s_j) < l(\sigma_x)$, a contradiction.
		Hence $\sigma_x^{-1} x \not\in H_j$ for all $j \in \Dr(\sigma_x)$.
		Hence $\sigma_x^{-1} x \in C_{\Dr(\sigma_x)}$, and so $x \in \sigma_x C_{\Dr(\sigma_x)}$.
		This proves that $C = \bigcup_{\sigma \in W} \sigma \, C_{\Dr(\sigma)}$.

		To prove disjointness, let $x \in \sigma \, C_{\Dr(\sigma)}$ for some $\sigma \in W$.
		We have to show that $\sigma = \sigma_x$.
		Clearly $\sigma \in W(x)$.
		It remains to show that $\sigma$ has minimal length in $W(x)$.
		Assume that $\sigma$ has not minimal length in $W(x)$.
		Then there is $j \in \{1, \ldots, n-1\}$ such that $l(\sigma s_j) < l(\sigma)$ and $\sigma s_j \in W(x)$ \cite[\S 1.10]{Humphreys}.
		From $\sigma \in W(x)$	 we conclude that $\sigma^{-1}x \in C_+$ and from $\sigma s_i \in W(x)$ that $s_j \sigma^{-1}x \in C_+$.
		Hence $\sigma^{-1}x \in H_j$.		
		Since $l(\sigma s_j) < l(\sigma)$ we have $j \in \Dr(\sigma)$.
		Hence $\sigma^{-1}x \not\in C_{\Dr(\sigma)}$, and so $x \not\in \sigma \, C_{\Dr(\sigma)}$, a contradiction.
	\end{proof}

\subsection{Generating functions for monoconditional cones}

	Let $V^*_\mathbf{C} = V^* \otimes_\mathbf{R} \mathbf{C}$.
	Extend $\langle\;\;{,}\;\;\rangle$ to $V \times V^*_\mathbf{C}$ by $\mathbf{C}$-linearity in the second argument.	
	Let $\Gamma \subset V$ be a lattice and $S \subset V$.
	Suppose that there is a nonempty open subset $B \subset V^*_\mathbf{C}$ such that the series $\sum_{x \in S \cap \Gamma} e^{- \langle x, \phi \rangle}$ converges for $\phi \in B$ and has a meromorphic continuation to $V^*_\mathbf{C}$.
	We denote this continuation by $f_S$ and call it the \definedterm{generating function} of $S$ with respect to $\Gamma$.

	\begin{example}
		If $C \subset V$ is a cone, let
		\[
			C^\vee \defeq \{ \phi \in V^* \mid \forall \, x \in C : \langle x, \phi \rangle \geq 0 \} \subset V^*
		\]
		be its dual cone.
		The complexified dual of $C$ is defined as
		\[
			C^\vee_\mathbf{C} \defeq \{ \phi \in V^*_\mathbf{C} \mid \forall \, x \in C : \Re(\langle x, \phi \rangle) \geq 0 \} = C^\vee + i \, V^* .
		\]
		Let $C \subset V$ be a salient cone, rational with respect to $\Gamma$.
		Then $\sum_{x \in C \cap \Gamma} e^{- \langle x, \phi \rangle}$ converges on the interior of $C^\vee_\mathbf{C}$ and has a meromorphic continuation $f_C$ to $V^*_\mathbf{C}$.
	\end{example}

	From now on, suppose that $W$ is crystallographic, i.e., that there is a $W$-invariant lattice $\Gamma$ in $V$.
	A full-dimensional simplicial cone $C \subset V$ is called \definedterm{unimodular} (with respect to $\Gamma$) if it is generated by a basis of $\Gamma$. These generators are called \definedterm{primitive}.

	\begin{theorem} \label{vbvXkhNC}
		In the situation of \autoref{8UQHag7u}, suppose that $C_+$ is unimodular with respect to $\Gamma$.
		Let $b_1, \ldots, b_n$ be the primitive generators of $C_+$, enumerated in the unique way such that $b_j \not\in H_j$ for $j \in \{ 1, \ldots, n-1 \}$.
		Then the generating function of $C$ is
		\[
			f_C(\phi) = \sum_{\sigma \in W} \frac{\prod_{j \in \Dr(\sigma)} e^{- \langle \sigma b_j, \phi \rangle }}{(1-e^{- \langle \sigma b_1, \phi \rangle}) \cdots (1-e^{- \langle \sigma b_n, \phi \rangle})} \, .
		\]
	\end{theorem}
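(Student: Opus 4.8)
The plan is to compute the generating function of each piece in the disjoint decomposition of \autoref{JNSFSTk5} and add; so first I would determine the generating function of a single partially open cone $C_J$. Since $C_+$ is unimodular, its lattice points are exactly the nonnegative integer combinations $\sum_{i=1}^n c_i b_i$ with $c_i \in \mathbf{Z}_{\geq 0}$, so
\[
	f_{C_+}(\phi) = \prod_{i=1}^n \frac{1}{1 - e^{-\langle b_i, \phi \rangle}} \, .
\]
The key bookkeeping step is that, by the normalization $b_j \notin H_j$, the facet hyperplane $H_j$ is precisely the linear span of $\{ b_i \mid i \neq j \}$; hence a lattice point $\sum_i c_i b_i$ lies on $H_j$ if and only if $c_j = 0$. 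Consequently the lattice points of $C_J = C_+ \setminus \bigcup_{j \in J} H_j$ are the combinations with $c_j \geq 1$ for $j \in J$ and $c_i \geq 0$ otherwise, and a geometric-series evaluation gives
\[
	f_{C_J}(\phi) = \frac{\prod_{j \in J} e^{-\langle b_j, \phi \rangle}}{\prod_{i=1}^n (1 - e^{-\langle b_i, \phi \rangle})} \, .
\]
Because $\Gamma$ is $W$-invariant, $\sigma$ maps the basis $b_1, \ldots, b_n$ of $\Gamma$ to the basis $\sigma b_1, \ldots, \sigma b_n$ and restricts to a bijection $C_J \cap \Gamma \to \sigma C_J \cap \Gamma$; replacing each $b_i$ by $\sigma b_i$ therefore gives the same formula for $f_{\sigma C_J}$.

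Next I would assemble the pieces. By \autoref{JNSFSTk5}, $C$ is the disjoint union $\bigcup_{\sigma \in W} \sigma C_{\Dr(\sigma)}$, so summing $e^{-\langle x, \phi \rangle}$ over $x \in C \cap \Gamma$ partitions into the corresponding series over the individual pieces. Taking $J = \Dr(\sigma)$ above and summing over $\sigma$ yields exactly the asserted expression, provided the rearrangement is justified. For this I would exhibit a common domain of convergence: each $\sigma C_{\Dr(\sigma)}$ is contained in $C$, so its complexified dual contains $C^\vee_\mathbf{C}$, and all the series converge absolutely on the interior of $C^\vee_\mathbf{C}$, where termwise rearrangement over a disjoint partition is legitimate. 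Uniqueness of meromorphic continuation then promotes the identity from this open set to all of $V^*_\mathbf{C}$.

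The main obstacle is less the computation than the two points one must be careful about. Geometrically, one must correctly identify $H_j$ with the vanishing of the $j$-th unimodular coordinate — this is where the normalization $b_j \notin H_j$ is used, and one must recall that the extra facet of $C_+$ cutting $F$ (indexed by $n$) is never removed, so its factor always survives in the denominator. Analytically, to invoke the convergence result for $C$ itself one needs $C$ to be salient: if $x$ and $-x$ both lie in $C$, then $\langle \sigma x, \phi \rangle = 0$ for all $\sigma \in W$, and irreducibility of the $W$-action on $V_1$ together with $V_1, V_2 \not\subset \ker(\phi)$ forces $x = 0$. With $C$ salient and rational, absolute convergence on the interior of $C^\vee_\mathbf{C}$ is available and the interchange of summation goes through.
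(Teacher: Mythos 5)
Your proposal is correct and follows essentially the same route as the paper: compute $f_{C_+}$ from unimodularity, observe that removing the facet in $H_j$ amounts to requiring $c_j \geq 1$ (the paper phrases this as $C_J \cap \Gamma = (C_+ \cap \Gamma) + \sum_{j \in J} b_j$), transport by $\sigma$, and sum over the disjoint decomposition of \autoref{JNSFSTk5}. The only difference is that you spell out the convergence, salience, and meromorphic-continuation points, which the paper leaves implicit.
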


	In practice, $\Gamma$ is often endowed with a distinguished basis.
	In this case, it is often more convenient to work with the following formulation.

	\begin{corollary} \label{jx6Xq2Cz}
		In the situation of \autoref{vbvXkhNC}, let $e_1, \ldots, e_n$ be a basis of $\Gamma$.
		Define coordinates $z_j$ on $V^*_\mathbf{C}$ by $z_j(\phi) \defeq e^{- \langle e_j, \phi \rangle}$.
		For $a = a_1 e_1 + \cdots + a_n e_n \in \Gamma$, let $z^a \defeq z_1^{a_1} \cdots z_n^{a_n}$.
		Then
		\[
			f_C = \sum_{\sigma \in W} \frac{\prod_{j \in \Dr(\sigma)} z^{\sigma b_j}}{(1-z^{\sigma b_1}) \cdots (1-z^{\sigma b_n})} \, .
		\]
	\end{corollary}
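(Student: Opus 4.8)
The plan is to obtain the Corollary as an immediate change of variables in \autoref{vbvXkhNC}. The key observation is that the coordinate functions $z_j$ extend multiplicatively to all of $\Gamma$: for any lattice vector $a = a_1 e_1 + \cdots + a_n e_n$, one computes
\[
z^a = \prod_{j=1}^n \bigl( e^{-\langle e_j, \phi \rangle} \bigr)^{a_j} = e^{-\sum_{j} a_j \langle e_j, \phi \rangle} = e^{-\langle a, \phi \rangle} \, ,
\]
where the last equality uses linearity of the pairing in its first argument. Thus the $z$-monomial attached to a lattice vector is nothing but the exponential appearing in the theorem.

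Before substituting, I would verify that each symbol $z^{\sigma b_j}$ is well-defined, i.e., that $\sigma b_j \in \Gamma$. Since $C_+$ is unimodular, its primitive generators $b_1, \ldots, b_n$ form a basis of $\Gamma$, so in particular each $b_j \in \Gamma$; and since $W$ is crystallographic, $\Gamma$ is $W$-invariant, so $\sigma b_j \in \Gamma$ for all $\sigma \in W$. Hence each $\sigma b_j$ has integer coordinates in the basis $e_1, \ldots, e_n$, and the monomial $z^{\sigma b_j}$ is defined.

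Applying the identity $z^a = e^{-\langle a, \phi \rangle}$ with $a = \sigma b_j$ gives $z^{\sigma b_j} = e^{-\langle \sigma b_j, \phi \rangle}$ for every $\sigma$ and $j$. Replacing each exponential in the numerator and the denominator of the formula of \autoref{vbvXkhNC} by the corresponding monomial produces exactly the stated expression for $f_C$. As the whole argument is a substitution, there is no real obstacle; the only step that requires attention is the well-definedness check above, which is precisely where the unimodularity and crystallographic hypotheses are used together.
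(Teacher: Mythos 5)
Your proposal is correct and matches the paper's (implicit) treatment: the corollary is just the substitution $z^{\sigma b_j} = e^{-\langle \sigma b_j, \phi\rangle}$ applied to the formula of \autoref{vbvXkhNC}, and your well-definedness check (that $\sigma b_j \in \Gamma$ via unimodularity of $C_+$ and $W$-invariance of $\Gamma$) is exactly the right detail to supply.
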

	
	\begin{proof}[Proof of \autoref{vbvXkhNC}]
		As $C_+$ is unimodular with primitive generators $b_1, \ldots, b_n$, its generating function is
		\[
			f_{C_+}(\phi) = \frac{1}{(1-e^{- \langle b_1, \phi \rangle}) \cdots (1-e^{- \langle b_n, \phi \rangle})}	\, .
		\]
		With unimodularity it also follows that each generator of $C_+$ is only one lattice hyperplane away from the opposite facet.
		Hence
		\[
			C_{\{j\}} \cap \Gamma = (C_+ \setminus H_j) \cap \Gamma = (C_+ \cap \Gamma) + b_j
		\]
		for all $j \in \{1, \ldots, n-1 \}$.
		More generally, $C_{J} \cap \Gamma = (C_+ \cap \Gamma) + \sum_{j \in J} b_j$ for any $J \subset \{1, \ldots, n-1 \}$.
		Applying this observation to $J = \Dr(\sigma)$ for a $\sigma \in W$ and rephrasing it in terms of generating functions, one obtains
		\[
			f_{C_{\Dr(\sigma)}}(\phi) = \frac{\prod_{j \in \Dr(\sigma)} e^{- \langle b_j, \phi \rangle }}{(1-e^{- \langle b_1, \phi \rangle}) \cdots (1-e^{- \langle b_n, \phi \rangle})} \, .
		\]
		Hence for all $\sigma \in W$ it follows that
		\[
			f_{\sigma C_{\Dr(\sigma)}}(\phi) = \frac{\prod_{j \in \Dr(\sigma)} e^{- \langle \sigma b_j, \phi \rangle }}{(1-e^{- \langle \sigma b_1, \phi \rangle}) \cdots (1-e^{- \langle \sigma b_n, \phi \rangle})} \, .
		\]
		By \autoref{JNSFSTk5}, $f_C = \sum_{\sigma \in W} f_{\sigma C_{\Dr(\sigma)}}$, which proves the formula.		
	\end{proof}

\section{Cones with the symmetry of a simplex} \label{AKg4ecYc}

	\autoref{vbvXkhNC} specializes to more concrete identities once we fix a particular almost irreducible reflection group $W$.
	The case of $W$ being the group of symmetries of a simplex has been treated in \cite{BeckGesselLeeSavage}.
	We include this case here to show how the result can be derived from \autoref{vbvXkhNC}.

	Let $S_n$ denote the group of permutations of the set $\{ 1, \ldots, n \}$.
	For $\pi \in S_n$, we define the \definedterm{descent set} of $\pi$ as
	\begin{equation} \label{ZXBAbXGt}
		D(\pi) \defeq \{ j \in \{ 1, \ldots, n-1 \} \mid \pi(j) > \pi(j+1) \} \, .
	\end{equation}
	This is the standard definition used in the literature on permutations.

	The group $S_n$ acts on $\mathbf{R}^n$ by permutation of the components.
	For $\pi \in S_n$, let $\sigma_\pi \in \mathrm{O}(\mathbf{R}^n)$ denote the transformation by which $\pi$ acts on $\mathbf{R}^n$.
	Let $W = \{ \sigma_\pi \mid \pi \in S_n \} \subset \mathrm{O}(\mathbf{R}^n)$.
	Then $W$ is the group of symmetries of the $(n-1)$-dimensional standard simplex.
	For $j = 1, \ldots, n-1$, let $s_j \in W$ be the transposition of the $j$th and $(j+1)$st component in $\mathbf{R}^n$.
	Then $s_1, \ldots, s_{n-1}$ are simple generators of $W$.

	The following shows that the definitions of descent given in \eqref{iz7umNWf} and \eqref{ZXBAbXGt} agree.

	\begin{proposition}[{\cite[Proposition 1.5.3]{BjornerBrenti}}] \label{prop:typeAdes}
	$\Dr(\sigma_\pi) = D(\pi)$ for all $\pi \in S_n$.
	\end{proposition}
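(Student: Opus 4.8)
The plan is to reduce the identity to the classical fact that, for the symmetric group equipped with the adjacent transpositions as simple generators, the Coxeter length coincides with the number of inversions. First I would make the identification precise: the assignment $\pi \mapsto \sigma_\pi$ is a group isomorphism $S_n \to W$ carrying the adjacent transposition $(j,j+1)$ to the simple generator $s_j$, so it matches the two sets of simple generators and hence the two length functions. Consequently the right descent set $\Dr(\sigma_\pi)$ of \eqref{iz7umNWf} is computed through right multiplication $\sigma_\pi s_j = \sigma_{\pi \cdot (j,j+1)}$, and in one-line notation $\pi \cdot (j,j+1)$ is obtained from $\pi$ by interchanging the entries in positions $j$ and $j+1$.

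The technical core is the formula $l(\sigma_\pi) = \operatorname{inv}(\pi)$, where $\operatorname{inv}(\pi) = |\{ (i,k) : i < k,\ \pi(i) > \pi(k) \}|$. I would establish this in two halves. For the upper bound $l(\sigma_\pi) \le \operatorname{inv}(\pi)$, I would run a bubble-sort argument: as long as $\pi$ is not the identity it has some position $j$ with $\pi(j) > \pi(j+1)$, and right multiplication by $s_j$ strictly decreases the number of inversions by exactly one; iterating produces a word in the $s_j$ of length $\operatorname{inv}(\pi)$ expressing $\sigma_\pi$. For the lower bound $\operatorname{inv}(\pi) \le l(\sigma_\pi)$, I would observe that right multiplication by a single $s_j$ transposes two entries in adjacent positions and therefore alters $\operatorname{inv}$ by precisely $\pm 1$ (the only pair whose inversion status changes in the net count is the transposed one); since $\operatorname{inv}(\mathrm{id}) = 0$, any expression of $\sigma_\pi$ as a product of $k$ simple generators forces $\operatorname{inv}(\pi) \le k$.

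With $l = \operatorname{inv}$ in hand, the conclusion is immediate. By the adjacent-swap computation just used, $\operatorname{inv}(\pi \cdot (j,j+1)) = \operatorname{inv}(\pi) - 1$ exactly when $\pi(j) > \pi(j+1)$, and equals $\operatorname{inv}(\pi) + 1$ otherwise. Translating through $l = \operatorname{inv}$ gives $l(\sigma_\pi s_j) < l(\sigma_\pi)$ if and only if $\pi(j) > \pi(j+1)$, which is precisely the assertion $\Dr(\sigma_\pi) = D(\pi)$ after comparing \eqref{iz7umNWf} with \eqref{ZXBAbXGt}.

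I expect the main subtlety to be bookkeeping about conventions rather than any deep difficulty. One must confirm that $\pi \mapsto \sigma_\pi$ is a homomorphism (not an anti-homomorphism) for the chosen action on components, since otherwise right multiplication in $W$ would correspond to left multiplication in $S_n$ and one would land on $D(\pi^{-1})$ instead of $D(\pi)$; equivalently, one must verify that right multiplication by $s_j$ transposes \emph{positions}, not \emph{values}, in the one-line notation. Once that direction is fixed, the inversion count delivers the stated descent set directly. As this is the standard type-$A$ result, an alternative is simply to invoke \cite[Proposition 1.5.3]{BjornerBrenti}, exactly as the proposition statement already attributes it.
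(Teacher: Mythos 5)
The paper itself offers no proof of this proposition; it is simply cited from Bj\"orner--Brenti, so there is nothing internal to compare against. Your argument is correct and self-contained, and it is essentially the standard derivation (and the one the cited reference uses): identify $\pi\mapsto\sigma_\pi$ as an isomorphism matching the adjacent transpositions with the chosen simple generators, prove $l(\sigma_\pi)=\mathrm{inv}(\pi)$ by the bubble-sort upper bound together with the observation that right multiplication by $s_j$ changes the inversion number by exactly $\pm 1$, and then read off that $l(\sigma_\pi s_j)<l(\sigma_\pi)$ precisely when $\pi(j)>\pi(j+1)$. You are also right to flag the homomorphism-versus-antihomomorphism issue as the one real pitfall: with the paper's convention $\sigma_\pi e_i=e_{\pi(i)}$ one has $\sigma_\pi\sigma_\rho=\sigma_{\pi\rho}$, so right multiplication by $s_j$ does swap the entries in \emph{positions} $j$ and $j+1$ of the one-line notation, which is exactly what makes the descent set come out as $D(\pi)$ rather than $D(\pi^{-1})$. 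The only net effect of writing out the proof rather than citing it is self-containedness; mathematically the two routes coincide.
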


        Our main result in this section is the following.

	\begin{proposition}[{\cite[Theorem 1]{BeckGesselLeeSavage}}] \label{BPEevVg3}
		Fix integers $a_1 \leq \cdots \leq a_n$ such that $a_1 + \cdots + a_n = 1$.
		Let
		\[ 
			C \defeq \{ x \in \mathbf{R}^n \mid
			\forall \, \pi \in S_n :
			a_1 x_{\pi(1)} + \cdots + a_n x_{\pi(n)} \geq 0 \} \, .
		\]
		Let $\Sigma_j \defeq a_1 + \cdots + a_j$ for $j \in \{1, \ldots, n-1 \}$.
		The generating function of $C$ with respect to $\mathbf{Z}^n$ is
		\[
			f_C = \frac{1}{1-z_1 \cdots z_n} \sum_{\pi \in S_n} \frac{\prod_{j \in D(\pi)} (z_1 \cdots z_n)^{-\Sigma_j} \prod_{i=1}^j z_{\pi(i)}}{\prod_{j=1}^{n-1} \left(1 - (z_1 \cdots z_n)^{-\Sigma_j} \prod_{i=1}^j z_{\pi(i)} \right)} \, .
		\]
	\end{proposition}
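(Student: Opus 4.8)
The plan is to apply the coordinate form \autoref{jx6Xq2Cz} of \autoref{vbvXkhNC} to the permutation action of $S_n$, so that the real work is the explicit identification of the fundamental domain $F$, the cone $C_+$, and its primitive generators $b_1,\dots,b_n$. First I would take $\phi$ to be the linear form $x \mapsto a_1 x_1 + \cdots + a_n x_n$, so that the inequalities $\langle \sigma_\pi x, \phi \rangle \ge 0$ range exactly over $a_1 x_{\pi(1)} + \cdots + a_n x_{\pi(n)} \ge 0$ as $\pi$ runs over $S_n$, and $C$ is monoconditional in the sense of \eqref{BEyS3CWg}. The nondegeneracy conditions $V_1, V_2 \not\subset \ker\phi$ hold because $\sum_i a_i = 1 \neq 0$, so $\phi$ is nonzero on the diagonal $V_2$, while an integer vector summing to $1$ cannot have all components equal when $n \ge 2$, so $a \notin V_2$ and $\phi$ is nonzero on $V_1$. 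I would fix the fundamental domain $F = \{ x_1 \ge x_2 \ge \cdots \ge x_n \}$, whose facet hyperplanes are $H_j = \{ x_j = x_{j+1} \}$ with simple reflections $s_j = (j,\,j{+}1)$, matching the descent conventions of \autoref{prop:typeAdes}.

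Next I would make $C_+ = C \cap F$ explicit. On $F$ the vector $x$ is weakly decreasing, and since $a_1 \le \cdots \le a_n$ the rearrangement inequality gives $\min_\pi \sum_i a_i x_{\pi(i)} = \sum_i a_i x_i$, attained at $\pi = \mathrm{id}$; hence the single inequality $\sum_i a_i x_i \ge 0$ implies all the others on $F$, so
\[
    C_+ = \{ x \mid x_1 \ge \cdots \ge x_n, \ a_1 x_1 + \cdots + a_n x_n \ge 0 \},
\]
consistent with the description $C_+ = \{ x \in F \mid (x,x_0) \le 0 \}$ from the proof of \autoref{8UQHag7u} with $x_0 = -a \in F$. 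This cone has the $n$ facets $H_1, \dots, H_{n-1}$ and $K = \{ \sum_i a_i x_i = 0 \}$, hence is simplicial. Intersecting all facet hyperplanes but one, I would read off the rays: omitting $K$ yields $b_n = (1, \dots, 1)$, while for $j \le n-1$, omitting $H_j$ forces $x_1 = \cdots = x_j$ and $x_{j+1} = \cdots = x_n$ together with $\sum_i a_i x_i = 0$, giving
\[
    b_j = (\underbrace{1-\Sigma_j, \dots, 1-\Sigma_j}_{j}, \underbrace{-\Sigma_j, \dots, -\Sigma_j}_{n-j}) = (e_1 + \cdots + e_j) - \Sigma_j (e_1 + \cdots + e_n).
\]
Each $b_j$ is primitive since $\gcd(1-\Sigma_j, \Sigma_j) = 1$, and $b_j \notin H_j$ as its $j$-th and $(j{+}1)$-st coordinates differ by $1$, which is exactly the enumeration demanded by \autoref{vbvXkhNC}. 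Unimodularity follows because the change of basis from $b_1, \dots, b_n$ to the staircase basis $u_j = e_1 + \cdots + e_j$, via $b_j = u_j - \Sigma_j u_n$ and $b_n = u_n$, is unipotent and triangular, hence of determinant $1$.

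Finally I would substitute into \autoref{jx6Xq2Cz}. Writing $\sigma_\pi$ for the action $(\sigma_\pi x)_i = x_{\pi^{-1}(i)}$, so that $\sigma_\pi e_k = e_{\pi(k)}$ and $\sigma_\pi b_n = b_n$, one computes
\[
    z^{\sigma_\pi b_j} = (z_1 \cdots z_n)^{-\Sigma_j} \prod_{i=1}^j z_{\pi(i)} \quad (j \le n-1), \qquad z^{\sigma_\pi b_n} = z_1 \cdots z_n.
\]
Since $b_n$ is fixed by $W$, the factor $1 - z_1 \cdots z_n$ appears in every denominator and can be pulled out front, and since $D(\pi) \subseteq \{1, \dots, n-1\}$ by \eqref{ZXBAbXGt} the index $n$ never enters the numerator. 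Invoking $\Dr(\sigma_\pi) = D(\pi)$ from \autoref{prop:typeAdes} to rewrite the descent set then turns \autoref{jx6Xq2Cz} into precisely the claimed expression. I expect the main obstacle to be bookkeeping rather than depth: one must pin down the action convention ($\pi$ versus $\pi^{-1}$) and the orientation of $F$ so that the clean product $\prod_{i=1}^j z_{\pi(i)}$ emerges and the descent set agrees with \eqref{ZXBAbXGt}; an inverse or a reversed chamber would instead produce $z_{\pi^{-1}(i)}$ or mismatched descents. The rearrangement-inequality identification of the binding constraint is the only other step requiring genuine care.
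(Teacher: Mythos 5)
Your proposal is correct and follows essentially the same route as the paper: identify $C$ as monoconditional, take $F=\{x_1\ge\cdots\ge x_n\}$, show $C_+$ is unimodular with generators $b_j=u_j-\Sigma_j u_n$, and feed this into \autoref{jx6Xq2Cz} via \autoref{prop:typeAdes}. The only (immaterial) differences are that you obtain $C_+=\{x\in F\mid \sum_i a_ix_i\ge 0\}$ by the rearrangement inequality and find the $b_j$ by intersecting facets, whereas the paper cites the proof of \autoref{8UQHag7u} and reads the $b_j$ off as the columns of $A^{-1}$ with $\det A=\sum_i a_i=1$.
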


	Note that the condition on the $a_i$ to be increasing is a normalization rather than a restriction.

	\begin{proof}
		The cone $C$ is symmetric and monoconditional for $W$.
		Let $F = \{ x \in \mathbf{R}^n \mid x_1 \geq \cdots \geq x_n \}$, a fundamental domain for $W$.
		Then our chosen simple generators $s_1, \ldots, s_{n-1}$ of $W$ are the reflections at the facet hyperplanes of $F$.
		Let $x_0 = (-a_1, \ldots, -a_n) \in F$.
		By the proof of \autoref{8UQHag7u},
		\begin{align*}
			C_+ 
			&= \{ x \in F \mid (x, x_0) \leq 0 \} = \{ x \in F \mid a_1 x_1 + \cdots + a_n x_n \geq 0 \} \\
			&= \{ x \in \mathbf{R}^n \mid Ax \geq 0 \} \, ,
		\end{align*}
		where
		\[
			A = \begin{pmatrix} \begin{tikzpicture}[x=(-90:1.5em),y=(0:2.5em),dash pattern=on 0pt off 1ex,line cap=round,very thick]
				\draw (1,1) node[fill=white][fill=white]{$1$} -- (5,5) node[fill=white]{$1$};
				\draw (1,2) node[fill=white][fill=white]{$-1$} -- (5,6) node[fill=white]{$-1$};
				\draw (1,3) node[fill=white]{$0$} -- (1,6) node[fill=white]{$0$} -- (4,6) node[fill=white]{$0$} -- cycle;
				\draw (2,1) node[fill=white]{$0$} -- (5,1) node[fill=white]{$0$} -- (5,4) node[fill=white]{$0$} -- cycle;
				\draw (6,1) node[fill=white]{$a_1$} -- (6,6) node[fill=white]{$a_n$};
			\end{tikzpicture} \end{pmatrix} .
		\]
		The determinant of $A$ is $a_1 + \cdots + a_n = 1$, i.e., $A$ is unimodular and so is $C_+$.
		Let $b_1, \ldots, b_n$ be the primitive generators of $C_+$, enumerated in the unique way such that $b_j \notin H_j$ for $j \in \{1, \ldots, n-1 \}$.
		Then by \autoref{jx6Xq2Cz}, the generating function of $C$ is
		\[
			f_C = \sum_{\sigma \in W} \frac{\prod_{j \in \Dr(\sigma)} z^{\sigma b_j}}{(1-z^{\sigma b_1}) \cdots (1-z^{\sigma b_n})} \, .
		\]
		\autoref{BPEevVg3} follows once we describe $\Dr(\sigma)$, $b_j$, and the action of $W$ explicitly.
		The inverse of $A$ is
		\[
			B \defeq A^{-1} = \begin{pmatrix} \begin{tikzpicture}[x=(-90:1.5em),y=(0:2.5em),dash pattern=on 0pt off 1ex,line cap=round,very thick]
				\draw (1,1) node[fill=white]{$\Sigma_1'$} -- (1,4) node[fill=white]{$\Sigma_{n-1}'$} -- (4,4) node[fill=white]{$\Sigma_{n-1}'$} -- cycle;
				\draw (2,1) node[fill=white]{$-\Sigma_1$} -- (5,1) node[fill=white]{$-\Sigma_1$} -- (5,4) node[fill=white]{$-\Sigma_{n-1}$} -- cycle;
				\draw (1,5) node[fill=white]{$1$} -- (5,5) node[fill=white]{$1$};
			\end{tikzpicture} \end{pmatrix} ,
		\]
		where $\Sigma_j' \defeq 1 - \Sigma_j$.
		Then $b_j$ is the $j$th column vector of $B$.
		Let
		\[
			b_{ij} \defeq \begin{cases}
				1 & \text{if } j = n, \\
				1-\Sigma_j & \text{if } i \leq j < n, \\
				-\Sigma_j & \text{if } j < i
			\end{cases}
		\]
		be the $i$th component of $b_j$, i.e., the $(i,j)$th component of $B$.
		As defined above, with $\pi \in S_n$ we associate $\sigma_\pi \in \mathrm{O}(n)$ by $\sigma_\pi e_i = e_{\pi(i)}$.
		Then $W = \{ \sigma_\pi \mid \pi \in S_n \}$ and we have $\Dr(\sigma_\pi) = D(\pi)$.
		Hence
		\begin{align*}
			f_C
			&= \sum_{\sigma \in W} \frac{\prod_{j \in \Dr(\sigma)} z^{\sigma b_j}}{\prod_{j=1}^n (1-z^{\sigma b_j})} \\
			&= \sum_{\pi \in S_n} \frac{\prod_{j \in \Dr(\sigma_\pi)} z^{\sigma_\pi b_j}}{\prod_{j=1}^n (1-z^{\sigma_\pi b_j})} \\
			&= \sum_{\pi \in S_n} \frac{\prod_{j \in D(\pi)} \prod_{i=1}^n z_{\pi(i)}^{b_{ij}}}{\prod_{j=1}^n \left(1 - \prod_{i=1}^n z_{\pi(i)}^{b_{ij}} \right)} \\
			&= \frac{1}{1-z_1 \cdots z_n} \sum_{\pi \in S_n} \frac{\prod_{j \in D(\pi)} (z_1 \cdots z_n)^{-\Sigma_j} \prod_{i=1}^j z_{\pi(i)}}{\prod_{j=1}^{n-1} \left(1 - (z_1 \cdots z_n)^{-\Sigma_j} \prod_{i=1}^j z_{\pi(i)} \right)} \, .
			\qedhere
		\end{align*}
	\end{proof}

\section{Cones with hyperoctahedral symmetry} \label{dxuwjrHR}

	We now consider the case of cones which are symmetric under the action of a hyperoctahedral group.
	Let $W \subset \mathrm{O}(n)$ be the hyperoctahedral group on the first $n-1$ components of $\mathbf{R}^n$.
	Let $s_1 \in W$ be the sign change in the first component and, for $j = 2, \ldots, n-1$, let $s_j \in W$ be the transposition of the $(j-1)$st and $j$th component in $\mathbf{R}^n$.
	Then $s_1, \ldots, s_{n-1}$ are simple generators of $W$.

	For combinatorial (as opposed to geometric) arguments, it is often more convenient to use the following parameterization of the hyperoctahedral group:
	For $\pi \in S_{n-1}$ and $\epsilon \in \{\pm 1\}^{n-1}$, define $\sigma_{\pi,\epsilon} \in \mathrm{O}(n)$ by
	\begin{equation} \label{CzP35kqx}
		\sigma_{\pi,\epsilon}e_i = \epsilon_i e_{\pi(i)} \, ,
	\end{equation}
	where we use the convention that $\pi(n) \defeq n$ for $\pi \in S_{n-1}$ and $\epsilon_n \defeq 1$ for $\epsilon \in \{\pm 1\}^{n-1}$.
	Then $W = \{ \sigma_{\pi,\epsilon} \mid \pi \in S_{n-1},$ $\epsilon \in \{\pm 1\}^{n-1} \}$.
	Let $B_{n-1}$ denote the set $S_{n-1} \times \{\pm 1\}^{n-1}$,
	endowed with the group structure such that $\sigma : B_{n-1} \to W$ becomes an isomorphism of groups.

	In terms of this parameterization, the right descent set of $W$ can be expressed more explicitly.
	For $(\pi, \epsilon) \in B_{n-1}$ let
	\begin{equation} \label{82aH5Zd5}
		D(\pi, \epsilon) \defeq \{ j \in \{1, \ldots, n-1\} \mid \epsilon_{j-1} \pi(j-1) > \epsilon_j \pi(j) \}
	\end{equation}
	with the convention that $\epsilon_0 \pi(0) \defeq 0$.
	Then the following holds.

	\begin{proposition}[{\cite[Proposition 8.1.2]{BjornerBrenti}}] \label{prop:typeBdes}
		For all $(\pi, \epsilon) \in B_{n-1}$, we have
		\[
			\Dr(\sigma_{\pi,\epsilon}) = D(\pi,\epsilon) \, .
		\]
	\end{proposition}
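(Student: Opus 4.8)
The plan is to use the standard root-theoretic description of right descents in a finite reflection group. Let $\alpha_1, \ldots, \alpha_{n-1}$ be the simple roots associated with the simple reflections $s_1, \ldots, s_{n-1}$, normalized so that each $s_j$ is the reflection in $\alpha_j^\perp$ and the $\alpha_j$ are the inward normals of the fundamental domain $F$. The tool I would invoke is that for every $\sigma \in W$ one has $j \in \Dr(\sigma)$ if and only if $\sigma \alpha_j$ is a negative root; see \cite[\S 5.7]{Humphreys}. This reduces the entire statement to computing $\sigma_{\pi,\epsilon}\alpha_j$ and reading off its sign.

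First I would pin down the geometry. The fundamental domain cut out by our generators is $F = \{ x \in \mathbf{R}^n \mid 0 \le x_1 \le x_2 \le \cdots \le x_{n-1} \}$: its facet hyperplanes are $x_1 = 0$, fixed by the sign change $s_1$, and $x_{j-1} = x_j$, fixed by the transposition $s_j$, exactly as required. Reading off the inward normals yields the simple roots $\alpha_1 = e_1$ and $\alpha_j = e_j - e_{j-1}$ for $j = 2, \ldots, n-1$. A short induction (using $e_k = \alpha_1 + \cdots + \alpha_k$) shows that the positive roots are $e_i$, $e_j - e_i$, and $e_i + e_j$ for $i < j$, from which I extract the convenient criterion: a root is negative precisely when the coefficient of its highest-index basis vector is $-1$.

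With this in hand the computation is a finite check. For $j = 1$ we have $\sigma_{\pi,\epsilon}\alpha_1 = \epsilon_1 e_{\pi(1)}$, which is negative iff $\epsilon_1 = -1$; since $\pi(1) > 0$, this is exactly the condition $0 > \epsilon_1 \pi(1)$, i.e.\ $1 \in D(\pi,\epsilon)$ under the convention $\epsilon_0 \pi(0) = 0$. For $j \ge 2$ we have $\sigma_{\pi,\epsilon}\alpha_j = \epsilon_j e_{\pi(j)} - \epsilon_{j-1}e_{\pi(j-1)}$, and since $\pi$ is injective the indices satisfy $\pi(j-1) \ne \pi(j)$, so the highest-index criterion applies. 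I would then split into the four sign patterns of $(\epsilon_{j-1}, \epsilon_j)$, subdividing the two equal-sign patterns according to whether $\pi(j-1) < \pi(j)$ or $\pi(j-1) > \pi(j)$. In each of the resulting cases the root $\sigma_{\pi,\epsilon}\alpha_j$ turns out to be negative exactly when the signed integer $\epsilon_{j-1}\pi(j-1)$ exceeds $\epsilon_j\pi(j)$, which is precisely membership in $D(\pi,\epsilon)$ as defined in \eqref{82aH5Zd5}.

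The genuinely delicate part is bookkeeping rather than ideas: fixing the orientation so that the simple roots are the inward normals of the correct chamber (and hence getting the sign of the descent criterion right), and carrying the boundary convention $\epsilon_0\pi(0) = 0$ correctly through the $j=1$ case. Everything else is the routine four-way case analysis. Alternatively, one could invoke \cite[Proposition 8.1.2]{BjornerBrenti} directly after matching our shifted indexing $s_1, \ldots, s_{n-1}$ (with the sign change listed first) to the standard generators of the Coxeter system of type $B_{n-1}$.
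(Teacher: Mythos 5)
Your argument is correct, but it is not the route the paper takes: the paper offers no proof at all, simply citing \cite[Proposition 8.1.2]{BjornerBrenti} (and remarking separately that its descent set \eqref{82aH5Zd5} is shifted by $+1$ relative to the signed-permutation literature because the simple reflections are indexed from $1$ rather than $0$). Your proof is a self-contained verification from the root-theoretic characterization $j \in \Dr(\sigma) \iff \sigma\alpha_j < 0$, and the details check out: with $F = \{0 \le x_1 \le \cdots \le x_{n-1}\}$ the simple roots are indeed $\alpha_1 = e_1$ and $\alpha_j = e_j - e_{j-1}$, your "highest-index coefficient" test correctly detects negativity among the roots $\pm e_i$, $\pm e_i \pm e_j$, and the case analysis for $\sigma_{\pi,\epsilon}\alpha_j = \epsilon_j e_{\pi(j)} - \epsilon_{j-1}e_{\pi(j-1)}$ reproduces exactly the condition $\epsilon_{j-1}\pi(j-1) > \epsilon_j\pi(j)$, with the $j=1$ case handled by the convention $\epsilon_0\pi(0)=0$ since $\pi(1)>0$. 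What your approach buys is precisely the convention-matching that the bare citation leaves implicit: it confirms that the paper's shifted indexing of generators, its choice of fundamental domain, and its formula \eqref{82aH5Zd5} are mutually consistent, at the cost of a page of routine computation; the citation is shorter but pushes that bookkeeping onto the reader, which your final sentence correctly identifies as the only nontrivial step in that alternative.
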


	Note that the descent set defined in \eqref{82aH5Zd5} is translated by $+1$ with respect to definitions found in the literature on signed permutations.
	This is because to have a consistent setup in \autoref{sH4iHn8E}, we always start the enumeration of the simple reflections with $1$, whereas from a signed permutations perspective it is convenient to start this enumeration with $0$.

	We define the descent statistic on the hyperoctahedral group by setting the \definedterm{descent number}
	\[
		\des(\pi,\epsilon)\defeq \lvert D(\pi,\epsilon) \rvert
	\]
	for $(\pi, \epsilon) \in B_{n-1}$.
	Similarly, the \definedterm{major index} is
	\[
		\maj(\pi,\epsilon)\defeq\sum_{j \in D(\pi,\epsilon)} (j-1)
	\]
	and the \definedterm{comajor index} is
	\[
		\comaj(\pi,\epsilon)
		\defeq \sum_{j \in D(\pi,\epsilon)} (n-j)
	\]
	for $(\pi, \epsilon) \in B_{n-1}$.
	It follows that we have the relationship
	\begin{equation}\label{eqn:comajdesmaj}
		\comaj(\pi,\epsilon)=(n-1)\des(\pi,\epsilon)-\maj(\pi,\epsilon) \, .
	\end{equation}

	\subsection{The multivariate generating function}

	In this situation, \autoref{jx6Xq2Cz} specializes as follows.

	\begin{proposition} \label{BDjVGd4o}
		Fix integers $0 \leq a_1 \leq \cdots \leq a_{n-1} \neq 0$.
		Let
		\[ \begin{split}
			C \defeq \{ x \in \mathbf{R}^n \mid {}
			& \forall \, \pi \in S_{n-1},\ \epsilon \in \{\pm 1\}^{n-1} : \\
			& \epsilon_1 a_1 x_{\pi(1)} + \cdots + \epsilon_{n-1} a_{n-1} x_{\pi(n-1)} \leq x_n \} \, .
		\end{split} \]
		
		The generating function of $C$ with respect to $\mathbf{Z}^n$ is
		\[
			f_C = \frac{1}{1-z_n} \sum_{\pi \in S_{n-1}} \sum_{\epsilon \in \{\pm 1\}^{n-1}} \frac{\prod_{j \in D(\pi, \epsilon)} \prod_{i=j}^{n-1} z_{\pi(i)}^{\epsilon_i} z_n^{a_i}}{\prod_{j=1}^{n-1} \left(1 - \prod_{i=j}^{n-1} z_{\pi(i)}^{\epsilon_i} z_n^{a_i} \right)} \, .
		\]
	\end{proposition}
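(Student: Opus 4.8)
The plan is to specialize \autoref{jx6Xq2Cz}, following the template of the proof of \autoref{BPEevVg3}. Thus the task is to identify, in the present type-$B$ setting, the defining linear form $\phi$, the fundamental domain $F$, the vector $x_0$, the matrix cutting out $C_+$ (and to check that it is unimodular), and finally the primitive generators $b_1,\dots,b_n$ together with the action of $W$ on them; substituting everything into \autoref{jx6Xq2Cz} should give the stated formula.

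First I would realize $C$ as a monoconditional symmetric cone. I take $\phi \in V^*$ with $\langle x, \phi\rangle = x_n - a_1 x_1 - \cdots - a_{n-1}x_{n-1}$. Since $\langle e_n,\phi\rangle = 1$ and $a_{n-1}\neq 0$, neither the trivial summand $V_2 = \mathbf{R}e_n$ nor the irreducible summand $V_1 = \operatorname{span}(e_1,\dots,e_{n-1})$ lies in $\ker\phi$, as required by \eqref{BEyS3CWg}. Using $\sigma_{\pi,\epsilon}e_i = \epsilon_i e_{\pi(i)}$ one finds $\langle \sigma_{\pi,\epsilon}x,\phi\rangle = x_n - \sum_{i=1}^{n-1}\epsilon_i a_{\pi(i)}x_i$, and as $(\pi,\epsilon)$ ranges over $B_{n-1}$ this family of inequalities coincides with the family $\sum_i \epsilon_i a_i x_{\pi(i)} \le x_n$ defining $C$. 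Granting this, $C$ is monoconditional for $\phi$, and so \autoref{8UQHag7u}, \autoref{JNSFSTk5}, \autoref{vbvXkhNC}, and \autoref{jx6Xq2Cz} all apply.

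Next I would execute the recipe from the proof of \autoref{8UQHag7u}. The fundamental domain attached to the given simple generators is $F = \{x : 0 \le x_1 \le \cdots \le x_{n-1}\}$, with facet hyperplanes $H_1 : x_1 = 0$ and $H_j : x_{j-1} = x_j$ for $2\le j\le n-1$. The vector with $\langle x,\phi\rangle = -(x, x_\phi)$ is $x_\phi = (a_1,\dots,a_{n-1},-1)$, and because $0\le a_1\le\cdots\le a_{n-1}$ it already lies in $F$, so $x_0 = x_\phi$. Hence $C_+ = \{x\in F : (x,x_0)\le 0\} = \{x : 0\le x_1\le\cdots\le x_{n-1},\ a_1x_1+\cdots+a_{n-1}x_{n-1}\le x_n\}$, which is $\{Ax\ge 0\}$ for the matrix $A$ with rows $x_1$, the differences $x_j - x_{j-1}$ for $2\le j\le n-1$, and $x_n - \sum_{i<n}a_i x_i$. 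Expanding along its last column shows $\det A = 1$, so $C_+$ is unimodular.

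Finally I would obtain the primitive generators as the columns $b_j$ of $A^{-1}$, which are automatically normalized so that $b_j\notin H_j$: solving $Ab_j = e_j$ gives $b_j = e_j + e_{j+1} + \cdots + e_{n-1} + \bigl(\sum_{i=j}^{n-1}a_i\bigr)e_n$ for $1\le j\le n-1$ and $b_n = e_n$. Applying $\sigma_{\pi,\epsilon}$ (which fixes $e_n$) and passing to monomials gives $z^{\sigma_{\pi,\epsilon}b_j} = \prod_{i=j}^{n-1} z_{\pi(i)}^{\epsilon_i}z_n^{a_i}$ for $j\le n-1$ and $z^{\sigma_{\pi,\epsilon}b_n} = z_n$. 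Plugging these into \autoref{jx6Xq2Cz}, replacing $\Dr(\sigma_{\pi,\epsilon})$ by $D(\pi,\epsilon)\subseteq\{1,\dots,n-1\}$ via \autoref{prop:typeBdes}, and pulling the $(\pi,\epsilon)$-independent factor $1/(1-z_n)$ out of the sum yields exactly the claimed expression. I expect the only genuinely delicate step to be the first one: matching the $W$-orbit of the single inequality $\langle\sigma x,\phi\rangle\ge 0$ against the stated constraint family. The subtlety is that the proposition indexes constraints by the action on the arguments $x_{\pi(i)}$ while the monoconditional definition acts by $\sigma x$, so the two families agree only after the reindexing $\pi\mapsto\pi^{-1}$, $\epsilon_k\mapsto\epsilon_{\pi^{-1}(k)}$, which one should verify is a bijection of $B_{n-1}$.
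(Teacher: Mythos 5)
Your proposal is correct and follows essentially the same route as the paper's proof: the same fundamental domain $F$, the same $x_0=(a_1,\dots,a_{n-1},-1)$, the same matrix $A$ cutting out $C_+$, the same primitive generators $b_j = e_j+\cdots+e_{n-1}+\Sigma_j e_n$ read off from $A^{-1}$, and the same substitution into \autoref{jx6Xq2Cz} via \autoref{prop:typeBdes}. The only difference is that you spell out the monoconditionality check and the $\pi\mapsto\pi^{-1}$ reindexing of the constraint family, which the paper asserts without comment; your computations there are correct.
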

	
	Note that the condition on the $a_i$ to be nonnegative and increasing is a normalization rather than a restriction.	

	\begin{proof}
		The cone $C$ is symmetric and monoconditional for $W$.
		Let 
		\[
			F \defeq \{ x \in \mathbf{R}^n \mid 0 \leq x_1 \leq \cdots \leq x_{n-1} \} \, ,
		\]
		a fundamental domain for $W$.
		The $s_1, \ldots, s_{n-1}$ defined previously are the simple generators of $W$ corresponding to $F$.
		Let $x_0 \defeq (a_1, \ldots, a_{n-1}, -1) \in F$.
		By the proof of \autoref{8UQHag7u},
		\begin{align*}
			C_+
			&= \{ x \in F \mid (x, x_0) \leq 0 \}
			= \{ x \in F \mid a_1 x_1 + \cdots + a_{n-1} x_{n-1} \leq x_n \} \\
			&= \{ x \in \mathbf{R}^n \mid Ax \geq 0 \} \, ,
		\end{align*}
		where
		\[
			A = \begin{pmatrix} \begin{tikzpicture}[x=(-90:1.5em),y=(0:2.5em),dash pattern=on 0pt off 1ex,line cap=round,very thick]
				\draw (1,1) node[fill=white][fill=white]{$1$} -- (5,5) node[fill=white]{$1$};
				\draw (2,1) node[fill=white]{$-1$} -- (5,4) node[fill=white]{$-1$};
				\draw (1,2) node[fill=white]{$0$} -- (1,6) node[fill=white]{$0$} -- (5,6) node[fill=white]{$0$} -- cycle;
				\draw (3,1) node[fill=white]{$0$} -- (5,1) node[fill=white]{$0$} -- (5,3) node[fill=white]{$0$} -- cycle;
				\draw (6,1) node[fill=white]{$-a_1$} -- (6,5) node[fill=white]{$-a_{n-1}$};
				\draw (6,6) node[fill=white]{$1$};
			\end{tikzpicture} \end{pmatrix} .
		\]
		The matrix $A$ and hence $C_+$ is unimodular.
		Let $b_1, \ldots, b_n$ be the primitive generators of $C_+$, enumerated in the unique way such that $b_j \notin H_j$ for $j < n$.
		Then by \autoref{jx6Xq2Cz}, the generating function of $C$ is
		\[
			f_C = \sum_{\sigma \in W} \frac{\prod_{j \in \Dr(\sigma)} z^{\sigma b_j}}{(1-z^{\sigma b_1}) \cdots (1-z^{\sigma b_n})} \, .
		\]

		The inverse of $A$ is
		\[
			B \defeq A^{-1} = \begin{pmatrix} \begin{tikzpicture}[x=(-90:1.5em),y=(0:2.5em),dash pattern=on 0pt off 1ex,line cap=round,very thick]
				\draw (1,1) node[fill=white]{$1$} -- (4,1) node[fill=white]{$1$} -- (4,4) node[fill=white]{$1$} -- cycle;
				\draw (1,2) node[fill=white]{$0$} -- (1,5) node[fill=white]{$0$} -- (4,5) node[fill=white]{$0$} -- cycle;
				\draw (5,1) node[fill=white]{$\Sigma_1$} -- (5,4) node[fill=white]{$\Sigma_{n-1}$};
				\draw (5,5) node[fill=white]{$1$};
			\end{tikzpicture} \end{pmatrix} ,
		\]
		where $\Sigma_j \defeq a_j + \cdots + a_{n-1}$.
		Then $b_j$ is the $j$th column vector of $B$.
		Let
		\[
			b_{ij} \defeq \begin{cases}
				0 & \text{if } i < j , \\
				1 & \text{if } j \leq i < n \text{ or } i = j = n , \\
				\Sigma_j & \text{if } j < i = n
			\end{cases}
		\]
		be the $i$th component of $b_j$, i.e., the $(i,j)$th component of $B$.
		By \autoref{prop:typeBdes} and using our notation introduced at the beginning of this section,
		\begin{align*}
			f_C
			&= \sum_{\sigma \in W} \frac{\prod_{j \in \Dr(\sigma)} z^{\sigma b_j}}{\prod_{j=1}^n (1-z^{\sigma b_j})} \\
			&= \sum_{(\pi, \epsilon) \in B_{n-1}} \frac{\prod_{j \in \Dr(\sigma_{\pi,\epsilon})} z^{\sigma_{\pi,\epsilon} b_j}}{\prod_{j=1}^n (1-z^{\sigma_{\pi,\epsilon} b_j})} \\
			&= \sum_{(\pi, \epsilon) \in B_{n-1}} \frac{\prod_{j \in D(\pi, \epsilon)} \prod_{i=1}^n z_{\pi(i)}^{\epsilon_i b_{ij}}}{\prod_{j=1}^n \left(1 - \prod_{i=1}^n z_{\pi(i)}^{\epsilon_i b_{ij}} \right)} \\
			&= \frac{1}{1-z_n} \sum_{(\pi, \epsilon) \in B_{n-1}} \frac{\prod_{j \in D(\pi, \epsilon)} \prod_{i=j}^{n-1} z_{\pi(i)}^{\epsilon_i} z_n^{a_i}}{\prod_{j=1}^{n-1} \left(1 - \prod_{i=j}^{n-1} z_{\pi(i)}^{\epsilon_i} z_n^{a_i} \right)} \, .
			\qedhere
		\end{align*}
	\end{proof}

\subsection{Hyperoctahedral Eulerian polynomials} \label{uCUseF9t}

	In the remainder of \autoref{dxuwjrHR}, we provide applications of \autoref{BDjVGd4o} with connections to permutation statistics and Ehrhart theory.
	Our first application is well known, going back to \cite{brentieulerian} and \cite{steingrimsson}; the polyhedral perspective of the following identity was first established in \cite{steingrimsson}, also using Ehrhart theory.

	\begin{corollary}[\cite{brentieulerian}, \cite{steingrimsson}] \label{j2dFgCe6}
		The hyperoctahedral Eulerian polynomials are given by
		\[
			\sum_{(\pi, \epsilon) \in B_{n-1}} t^{\des(\pi,\epsilon)}
			= (1-t)^n \sum_{k=0}^\infty (2k+1)^{n-1} t^k \, .
		\]
	\end{corollary}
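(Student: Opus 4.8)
The plan is to derive the identity as a specialization of the multivariate generating function in \autoref{BDjVGd4o}. First I would choose the admissible parameters $a_1 = \cdots = a_{n-2} = 0$ and $a_{n-1} = 1$, which satisfy the hypothesis $0 \le a_1 \le \cdots \le a_{n-1} \neq 0$ (the condition $a_{n-1}\neq 0$ is exactly what keeps $C$ monoconditional, since it forces the $V_1$-component of $x_0$ to be nonzero). With this choice the defining inequalities $\epsilon_{n-1} x_{\pi(n-1)} \le x_n$, taken over all $\pi \in S_{n-1}$ and $\epsilon \in \{\pm 1\}^{n-1}$, range over $\pm x_m \le x_n$ for every $m \in \{1,\dots,n-1\}$. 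Hence $C = \{x \in \mathbf{R}^n \mid |x_m| \le x_n \text{ for all } m\}$ is the cone over the cube, and its slice at height $x_n = k$ contains precisely $(2k+1)^{n-1}$ lattice points.

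Next I would evaluate $f_C$ in two ways under the partial specialization $z_1 = \cdots = z_{n-1} = 1$, $z_n = t$. Geometrically, $f_C$ reduces to the univariate series $\sum_{x \in C \cap \mathbf{Z}^n} t^{x_n} = \sum_{k \ge 0} (2k+1)^{n-1} t^k$, which converges for $|t|<1$ because each slice is finite. Combinatorially, with the chosen parameters one has $\Sigma_j = a_j + \cdots + a_{n-1} = 1$ for every $j$, so each factor $\prod_{i=j}^{n-1} z_{\pi(i)}^{\epsilon_i} z_n^{a_i}$ collapses to $z_n = t$, independently of $j$, $\pi$, and $\epsilon$. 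Consequently every numerator $\prod_{j \in D(\pi,\epsilon)}(\cdots)$ becomes $t^{\des(\pi,\epsilon)}$ and every denominator factor becomes $1-t$; together with the prefactor $1/(1-z_n)$ the closed form of \autoref{BDjVGd4o} collapses to
\[
	f_C = \frac{1}{(1-t)^n} \sum_{(\pi,\epsilon) \in B_{n-1}} t^{\des(\pi,\epsilon)} \, .
\]
Equating the two evaluations and multiplying by $(1-t)^n$ gives the claim.

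The step needing the most care is justifying the substitution $z_1 = \cdots = z_{n-1} = 1$, since these are a priori boundary values of the domain of convergence of $f_C$. I would handle this by noting that under this partial specialization every denominator factor in the closed form equals $1-t$, which is nonzero on a punctured neighborhood of $t=0$, so each of the finitely many summands stays analytic there; at the same time the defining lattice-point series reduces to $\sum_k (2k+1)^{n-1} t^k$, still convergent for $|t|<1$. Thus both expressions are genuine analytic functions of $t$ that agree on $|t|<1$, and the identity follows. The remaining points -- admissibility of the parameters and the slice count $(2k+1)^{n-1}$ -- are routine.
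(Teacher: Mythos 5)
Your proposal is correct and follows essentially the same route as the paper: specialize \autoref{BDjVGd4o} at $a_1=\cdots=a_{n-2}=0$, $a_{n-1}=1$, recognize $C$ as the cone over the cube $[-1,1]^{n-1}$, and equate the combinatorial specialization $z_1=\cdots=z_{n-1}=1$, $z_n=t$ of $f_C$ with the Ehrhart series $\sum_{k\geq 0}(2k+1)^{n-1}t^k$. Your extra remark justifying the boundary substitution is a welcome refinement of a step the paper treats as routine.
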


	\begin{proof}
		Let
		\[
			P = [-1,1]^{n-1} \, .
		\]
		be the $(n-1)$-dimensional hypercube.
		Our strategy to prove \autoref{j2dFgCe6} is to compute the Ehrhart series		\[
			\Ehr_P(t) \defeq \sum_{k \geq 0} \lvert kP \cap \mathbf{Z}^{n-1} \rvert \cdot t^k
		\]
		of $P$ in two different ways and to conclude by comparing the results.

		On the one hand, note that the cone $C$ over $P$,
		\[
			C = \{ x \in \mathbf{R}^n \mid \forall j < n : \lvert x_j \rvert \leq x_n \} \, ,
		\]
		is the cone considered in \autoref{BDjVGd4o} for $a_1 = \cdots = a_{n-2} = 0$, $a_{n-1} = 1$, so by \autoref{BDjVGd4o} its generating function is
		\[
		f_C = \frac{1}{1-z_n} \sum_{(\pi, \epsilon) \in B_{n-1}} \frac{\prod_{j \in D(\pi, \epsilon)} \left( z_n \prod_{i=j}^{n-1} z_{\pi(i)}^{\epsilon_i} \right)}{\prod_{j=1}^{n-1} \left(1 - z_n \prod_{i=j}^{n-1} z_{\pi(i)}^{\epsilon_i}  \right)} \, .
		\]
		Since $\Ehr_P(t)$ is obtained by evaluating $f_C$ at $z_1 = \cdots = z_{n-1} = 1$, $z_n = t$, we obtain
		\begin{align*}
		\Ehr_P(t)
		&= \frac{1}{1-t} \sum_{(\pi, \epsilon) \in B_{n-1}} \frac{\prod_{j \in D(\pi, \epsilon)} t}{\prod_{j=1}^{n-1} (1 - t)} \\
		&= \frac{1}{(1-t)^n} \sum_{(\pi, \epsilon) \in B_{n-1}} t^{\des(\pi,\epsilon)} \, .
		\end{align*}
		On the other hand, by definition
		\[
			\Ehr_P(t)
			= \sum_{k \geq 0} (2k+1)^{n-1} t^k \, .
		\]
		Together, we obtain
		\[
			\frac{1}{(1-t)^n} \sum_{(\pi, \epsilon) \in B_{n-1}} t^{\des(\pi,\epsilon)}
			= \sum_{k \geq 0} (2k+1)^{n-1} t^k
		\]
		and \autoref{j2dFgCe6} follows.
	\end{proof}

\subsection{The distribution of the comajor index} \label{TkEY25r3}

	We show here how to derive the distribution of the comajor index; while this is likely well known, we could not find an explicit statement in the literature.
	For $k \in \mathbf{N}$ and a variable $t$, let
	\[
		[k]_t \defeq 1 + t + t^2 + \cdots + t^{k-1}
		\qquad \text{ and } \qquad
		[k]_t! \defeq [1]_t [2]_t \cdots [k]_t \, .
	\]

	\begin{corollary} \label{JtP8XU22}
		The distribution of the comajor index on the hyperoctahedral group is given by
		\[
			\sum_{(\pi, \epsilon) \in B_{n-1}} t^{\comaj(\pi,\epsilon)}
			= (1+t)^{n-1} \, [n-1]_t! \, .
		\]
	\end{corollary}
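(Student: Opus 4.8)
The plan is to mimic the proof of \autoref{j2dFgCe6}: apply \autoref{BDjVGd4o} with a judicious choice of the parameters $a_i$, specialize the resulting generating function to a single variable $t$, and recognize that the same specialization computes the Ehrhart series of a familiar polytope. Here the right choice is $a_1 = \cdots = a_{n-1} = 1$. First I would observe that with these parameters the defining inequalities of $C$ read $\epsilon_1 x_{\pi(1)} + \cdots + \epsilon_{n-1} x_{\pi(n-1)} \le x_n$ for all $\pi$ and $\epsilon$; since $\max_{\pi,\epsilon} \sum_{i=1}^{n-1} \epsilon_i x_{\pi(i)} = \lvert x_1 \rvert + \cdots + \lvert x_{n-1} \rvert$, the cone $C$ is precisely the cone over the $(n-1)$-dimensional cross-polytope $P = \{ y \in \mathbf{R}^{n-1} \mid \lvert y_1 \rvert + \cdots + \lvert y_{n-1} \rvert \le 1 \}$, placed at height $x_n = 1$.

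Next I would specialize the formula of \autoref{BDjVGd4o} at $z_1 = \cdots = z_{n-1} = 1$ and $z_n = t$. The key bookkeeping is that with $a_i = 1$ one has $\Sigma_j = a_j + \cdots + a_{n-1} = n-j$, so each factor $\prod_{i=j}^{n-1} z_{\pi(i)}^{\epsilon_i} z_n^{a_i}$ collapses to $t^{\, n-j}$ (the factors $z_{\pi(i)}^{\epsilon_i}$ become $1$ regardless of the signs $\epsilon_i$). Thus the numerator of the $(\pi,\epsilon)$-term becomes $\prod_{j \in D(\pi,\epsilon)} t^{\, n-j} = t^{\comaj(\pi,\epsilon)}$, while the denominator becomes $(1-t)\prod_{j=1}^{n-1}(1 - t^{\, n-j}) = (1-t)\prod_{k=1}^{n-1}(1-t^k)$, independent of $(\pi,\epsilon)$. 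This yields
\[
	f_C\big|_{z_1=\cdots=z_{n-1}=1,\, z_n=t} = \frac{1}{(1-t)\prod_{k=1}^{n-1}(1-t^k)} \sum_{(\pi,\epsilon) \in B_{n-1}} t^{\comaj(\pi,\epsilon)} \, .
\]

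On the other hand, the same specialization records each lattice point of $C$ by its last coordinate, and the lattice points with $x_n = k$ are exactly those of $kP \cap \mathbf{Z}^{n-1}$; hence this specialization equals the Ehrhart series $\Ehr_P(t)$. Using the well-known fact that the $h^*$-polynomial of the cross-polytope is $(1+t)^{n-1}$, so that $\Ehr_P(t) = (1+t)^{n-1}/(1-t)^n$, I would equate the two expressions and solve for the comajor generating function. The final simplification uses $\prod_{k=1}^{n-1}(1-t^k)/(1-t)^{n-1} = [n-1]_t!$ and cancels the remaining powers of $(1-t)$ to produce $(1+t)^{n-1}\,[n-1]_t!$, as claimed.

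The only genuine ideas are the two identifications: that $a_i \equiv 1$ turns $C$ into the cross-polytope cone, and that this same choice makes $\Sigma_j = n-j$, so that the specialized numerator is exactly $t^{\comaj(\pi,\epsilon)}$. Everything after that is routine algebra, provided one has the Ehrhart series of the cross-polytope at hand. If one prefers not to cite the $h^*$-polynomial, the main obstacle instead becomes computing $\Ehr_P(t)$ directly, for instance from the lattice-point count $\lvert kP \cap \mathbf{Z}^{n-1}\rvert = \sum_{i=0}^{n-1} 2^i \binom{n-1}{i}\binom{k}{i}$ together with the standard generating-function evaluation, but this is a well-known computation.
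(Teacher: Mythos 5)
Your proposal is correct and follows essentially the same route as the paper: the paper likewise applies \autoref{BDjVGd4o} with $a_1 = \cdots = a_{n-1} = 1$, identifies $C$ as the cone over the cross-polytope, specializes to $z_1 = \cdots = z_{n-1} = 1$, $z_n = t$ to obtain $\sum_{(\pi,\epsilon)} t^{\comaj(\pi,\epsilon)} \big/ \bigl((1-t)\prod_{j=1}^{n-1}(1-t^{n-j})\bigr)$, and compares with the known Ehrhart series $(1+t)^{n-1}/(1-t)^n$. All of your bookkeeping (the collapse of $\prod_{i=j}^{n-1} z_{\pi(i)}^{\epsilon_i} z_n$ to $t^{n-j}$ and the final simplification to $[n-1]_t!$) matches the paper's computation.
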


	\begin{proof}
		Let
		\[
			P = \{ x \in \mathbf{R}^{n-1} \mid \lvert x_1 \rvert + \cdots + \lvert x_{n-1} \rvert \leq 1 \}
		\]
		be the $(n-1)$-dimensional cross-polytope.
		Our strategy to prove \autoref{JtP8XU22} is to compute the Ehrhart series
		\[
			\Ehr_P(t) \defeq \sum_{k \geq 0} \lvert kP \cap \mathbf{Z}^{n-1} \rvert \cdot t^k
		\]
		of $P$ in two different ways and to conclude by comparing the results.
		
		On the one hand, note that the cone $C$ over $P$,
		\[
			C = \{ x \in \mathbf{R}^n \mid \lvert x_1 \rvert + \cdots + \lvert x_{n-1} \rvert \leq x_n \} \, ,
		\]
		is the cone considered in \autoref{BDjVGd4o} for $a_1 = \cdots = a_{n-1} = 1$, so by \autoref{BDjVGd4o} its generating function is
		\[
		f_C = \frac{1}{1-z_n} \sum_{(\pi, \epsilon) \in B_{n-1}} \frac{\prod_{j \in D(\pi, \epsilon)} \prod_{i=j}^{n-1} z_{\pi(i)}^{\epsilon_i} z_n}{\prod_{j=1}^{n-1} \left(1 - \prod_{i=j}^{n-1} z_{\pi(i)}^{\epsilon_i} z_n \right)} \, .
		\]
		Since $\Ehr_P(t)$ is obtained by evaluating $f_C$ at $z_1 = \cdots = z_{n-1} = 1$, $z_n = t$, we obtain
		\begin{align*}
		\Ehr_P(t)
		&= \frac{1}{1-t} \sum_{(\pi, \epsilon) \in B_{n-1}} \frac{\prod_{j \in D(\pi, \epsilon)} \prod_{i=j}^{n-1} t}{\prod_{j=1}^{n-1} \left(1 - \prod_{i=j}^{n-1} t \right)} \\
		&= \frac{\sum_{(\pi, \epsilon) \in B_{n-1}} \prod_{j \in D(\pi, \epsilon)} t^{n-j}}{(1-t) \prod_{j=1}^{n-1} \left(1 - t^{n-j} \right)} \\
		&= \frac{\sum_{(\pi, \epsilon) \in B_{n-1}} t^{\comaj(\pi,\epsilon)}}{(1-t) \prod_{j=1}^{n-1} \left(1 - t^{n-j} \right)} \, .
		\end{align*}
		On the other hand, it is known \cite[Theorem 2.7]{BeckRobins} that
		\[
			\Ehr_P(t)
			= \frac{(1+t)^{n-1}}{(1-t)^n} \, .
		\]
		Together, we obtain
		\[
			\frac{\sum_{(\pi, \epsilon) \in B_{n-1}} t^{\comaj(\pi,\epsilon)}}{(1-t) \prod_{j=1}^{n-1} \left(1 - t^{n-j} \right)}
			= \frac{(1+t)^{n-1}}{(1-t)^n} \, ,
		\]
		so
		\begin{align*}
			\sum_{(\pi, \epsilon) \in B_{n-1}} t^{\comaj(\pi,\epsilon)}
			&= \frac{(1+t)^{n-1} (1-t) \prod_{j=1}^{n-1} (1 - t^{n-j})}{(1-t)^n} \\
			&= \frac{(1+t)^{n-1} \prod_{j=1}^{n-1} (1 - t^{n-j})}{(1-t)^{n-1}} \\
			&= (1+t)^{n-1} \prod_{j=1}^{n-1} \frac{1 - t^{n - j}}{1 - t} \\
			&= (1+t)^{n-1} \, [n-1]_t! \, . \qedhere
		\end{align*}
	\end{proof}

	\begin{remark}
		The distributions for the descent and comajor index statistics on $B_n$ arise from studying simple choices of the $a_i$ from the set of $0/1$-vectors.
		It would be interesting to determine the structure of the multivariate generating functions (or their specializations) when other $0/1$-vectors are used; due to the hyperoctahedral symmetry of our cones, this amounts to studying the case
		\[
			(a_1,a_2,\ldots,a_{n-1})=(0,\ldots,0,1,\ldots,1) \, .
		\]
		The resulting cones interpolate naturally between cones over hypercubes and cones over crosspolytopes.
		While we were not able to treat this family of polytopes using the methods exposed in this article, the following section shows how to do so for a different interpolation.
	\end{remark}

\subsection{Almost constant coefficients} 

	In this section, we show how to use \autoref{BDjVGd4o} to obtain a closed form expression for the Ehrhart series of a family of rational polytopes interpolating between the hypercube and the cross polytope (considered in subsections~\ref{uCUseF9t} and \ref{TkEY25r3}, respectively).
	These are the polytopes $P$ such that the cone over $P$ is of the form considered in \autoref{BDjVGd4o} such that $a_1$ through $a_{n-2}$ coincide.
	An example of such a polytope is shown in \autoref{cQJGQeJR}.

	\begin{figure}
		\centering
		\begin{tikzpicture}[x=(0:2em),y=(90:2em),z=(-140:.8em),join=round]
			\draw[gray] (-6,0,0) -- (-4,-4,0) -- (-3,-3,-3) -- (-4,0,-4) -- cycle;
			\draw[gray] (0,-6,0) -- (0,-4,-4) -- (-3,-3,-3) -- (-4,-4,0) -- cycle;
			\draw[gray] (0,0,-6) -- (-4,0,-4) -- (-3,-3,-3) -- (0,-4,-4) -- cycle;
			\draw[gray] (0,-6,0) -- (0,-4,-4) -- (3,-3,-3) -- (4,-4,0) -- cycle;
			\draw[gray] (6,0,0) -- (4,-4,0) -- (3,-3,-3) -- (4,0,-4) -- cycle;
			\draw[gray] (0,0,-6) -- (4,0,-4) -- (3,-3,-3) -- (0,-4,-4) -- cycle;
			\draw[gray] (-6,0,0) -- (-4,4,0) -- (-3,3,-3) -- (-4,0,-4) -- cycle;
			\draw[gray] (0,6,0) -- (0,4,-4) -- (-3,3,-3) -- (-4,4,0) -- cycle;
			\draw[gray] (0,0,-6) -- (-4,0,-4) -- (-3,3,-3) -- (0,4,-4) -- cycle;
			\draw[gray] (6,0,0) -- (4,4,0) -- (3,3,-3) -- (4,0,-4) -- cycle;
			\draw[gray] (0,6,0) -- (0,4,-4) -- (3,3,-3) -- (4,4,0) -- cycle;
			\draw[gray] (0,0,-6) -- (4,0,-4) -- (3,3,-3) -- (0,4,-4) -- cycle;
			\draw (-6,0,0) -- (-4,-4,0) -- (-3,-3,3) -- (-4,0,4) -- cycle;
			\draw (0,-6,0) -- (0,-4,4) -- (-3,-3,3) -- (-4,-4,0) -- cycle;
			\draw (0,0,6) -- (-4,0,4) -- (-3,-3,3) -- (0,-4,4) -- cycle;
			\draw (0,-6,0) -- (0,-4,4) -- (3,-3,3) -- (4,-4,0) -- cycle;
			\draw (6,0,0) -- (4,-4,0) -- (3,-3,3) -- (4,0,4) -- cycle;
			\draw (0,0,6) -- (4,0,4) -- (3,-3,3) -- (0,-4,4) -- cycle;
			\draw (-6,0,0) -- (-4,4,0) -- (-3,3,3) -- (-4,0,4) -- cycle;
			\draw (0,6,0) -- (0,4,4) -- (-3,3,3) -- (-4,4,0) -- cycle;
			\draw (0,0,6) -- (-4,0,4) -- (-3,3,3) -- (0,4,4) -- cycle;
			\draw (6,0,0) -- (4,4,0) -- (3,3,3) -- (4,0,4) -- cycle;
			\draw (0,6,0) -- (0,4,4) -- (3,3,3) -- (4,4,0) -- cycle;
			\draw (0,0,6) -- (4,0,4) -- (3,3,3) -- (0,4,4) -- cycle;
		\end{tikzpicture}
		\caption{The rational polytope $P = \{ x \in \mathbf{R}^3 \mid \lvert x_1 \rvert + \lvert x_2 \rvert + \lvert x_3 \rvert + \max {\{ \lvert x_1 \rvert, \lvert x_2 \rvert, \lvert x_3 \rvert \}} \leq 1 \}$.
		Its Ehrhart series can be computed by \autoref{32siiSSZ}.}
		\label{cQJGQeJR}
	\end{figure}

	\begin{corollary} \label{32siiSSZ}
		Let $b, c \geq 0$, not both $0$.
		Let
		\[
			P =
			\{
				x \in \mathbf{R}^{n-1}
			\mid
				c \cdot (\lvert x_1 \rvert + \cdots +  \lvert x_{n-1} \rvert)
				+ b\cdot \max {\{ \lvert x_1 \rvert, \ldots, \lvert x_{n-1} \rvert \}}
				\leq 1
			\} \, .
		\]
		Then the Ehrhart series of $P$ is
		\[
			\Ehr_P(t) = \begin{cases}
				[c]_t(1+t^c)^{n-1}/(1-t^c)^n
				& \text{if $b = 0$,} \\
				[b]_t \sum_{k \geq 0}\left([k+1]_{t^c}+t^c[k]_{t^c}\right)^{n-1} t^{bk}
				& \text{if $b \geq 1$.}
			\end{cases}
		\]
	\end{corollary}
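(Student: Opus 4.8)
The plan is to apply \autoref{BDjVGd4o} to place the cone over $P$ inside the monoconditional family, and then to evaluate the resulting Ehrhart series by a direct lattice-point count that makes the closed form fall out. First I would record the parameters: the cone $C \defeq \{x\in\mathbf{R}^n \mid c(|x_1|+\cdots+|x_{n-1}|)+b\max\{|x_1|,\ldots,|x_{n-1}|\}\le x_n\}$ over $P$ is exactly the cone of \autoref{BDjVGd4o} for $a_1=\cdots=a_{n-2}=c$ and $a_{n-1}=b+c$, and the normalization $0\le a_1\le\cdots\le a_{n-1}\neq 0$ holds because $b+c>0$. For these parameters $\Sigma_j=a_j+\cdots+a_{n-1}=(n-j)c+b$ for every $j$. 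Specializing the generating function of \autoref{BDjVGd4o} at $z_1=\cdots=z_{n-1}=1$, $z_n=t$ recovers $\Ehr_P(t)$; since this specialization equals $\sum_{x\in C\cap\mathbf{Z}^n}t^{x_n}$, grouping lattice points by their first $n-1$ coordinates gives $\Ehr_P(t)=\frac{1}{1-t}\,g(t)$, where $g(t)\defeq\sum_{x\in\mathbf{Z}^{n-1}}t^{\rho(x)}$ and $\rho(x)\defeq c(|x_1|+\cdots+|x_{n-1}|)+b\max_i|x_i|$. Here the factor $1/(1-t)$ comes from summing $t^{x_n}$ over integers $x_n\ge\rho(x)$, using that $\rho(x)\in\mathbf{Z}_{\ge 0}$ for integral $x$.

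The core step is to evaluate $g(t)$ by conditioning on the value $m=\max_i|x_i|$. Writing $h_m(t)\defeq\sum_{x:\,\max_i|x_i|\le m}t^{c\sum_i|x_i|}$ for the sum over the $\ell^\infty$-ball of radius $m$, the coordinates decouple, so $h_m(t)=\big(\sum_{|x|\le m}t^{c|x|}\big)^{n-1}$, and the single-coordinate sum is $\sum_{|x|\le m}t^{c|x|}=1+2t^c+2t^{2c}+\cdots+2t^{cm}=[m+1]_{t^c}+t^c[m]_{t^c}$. This last identity is the one nonroutine algebraic point, and it is precisely what introduces the factor appearing in the statement.

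For $b\ge 1$ the shell $\{\max_i|x_i|=m\}$ contributes $h_m(t)-h_{m-1}(t)$ (with $h_{-1}=0$), so $g(t)=\sum_{m\ge 0}t^{bm}(h_m-h_{m-1})=(1-t^b)\sum_{m\ge 0}t^{bm}h_m(t)$ after telescoping the shifted sum; the shift by $b\ge 1$ is exactly what lets the telescope close. Dividing by $1-t$ and using $(1-t^b)/(1-t)=[b]_t$ gives $\Ehr_P(t)=[b]_t\sum_{k\ge 0}\big([k+1]_{t^c}+t^c[k]_{t^c}\big)^{n-1}t^{bk}$, matching the claim. When $b=0$ the $\max$ term is absent, the telescope degenerates (which is why $b=0$ is a separate case), and the coordinates decouple completely: $g(t)=\big(\sum_{x\in\mathbf{Z}}t^{c|x|}\big)^{n-1}=\big((1+t^c)/(1-t^c)\big)^{n-1}$. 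Writing $\frac{1}{1-t}=\frac{[c]_t}{1-t^c}$ then turns $\Ehr_P(t)=g(t)/(1-t)$ into $[c]_t(1+t^c)^{n-1}/(1-t^c)^n$.

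The only genuine obstacle is the bookkeeping of the shell decomposition together with the single-coordinate identity above; everything else is formal manipulation. I would also flag a payoff: comparing the closed form just obtained with the signed-permutation sum coming from \autoref{BDjVGd4o}, whose denominator is $(1-t)\prod_{j=1}^{n-1}(1-t^{(n-j)c+b})$ and whose numerator is $\sum_{(\pi,\epsilon)\in B_{n-1}}t^{c\,\comaj(\pi,\epsilon)+b\,\des(\pi,\epsilon)}$, simultaneously determines the joint distribution of $\comaj$ and $\des$ on $B_{n-1}$, refining \autoref{j2dFgCe6} and \autoref{JtP8XU22}.
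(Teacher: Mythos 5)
Your proposal is correct, but it takes a genuinely different route from the paper's. The paper stays entirely on the signed-permutation side: it specializes \autoref{BDjVGd4o} to write $f_C(t)$ as $\sum_{(\pi,\epsilon)\in B_{n-1}}(t^{c})^{\comaj(\pi,\epsilon)}(t^b)^{\des(\pi,\epsilon)}$ over $(1-t)\prod_{j=1}^{n-1}(1-t^{c(n-j)+b})$, and then evaluates that numerator by importing the Chow--Gessel identity \eqref{chowgess} for the joint distribution of $\des$ and $\maj$ on $B_n$, converted to $\comaj$ via \eqref{eqn:comajdesmaj}; the $b=0$ case is disposed of separately using reflexivity of the cross-polytope. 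You bypass the formula of \autoref{BDjVGd4o} entirely (you invoke it only to identify the cone, which is not strictly necessary) and instead compute $\sum_{x\in C\cap\mathbf{Z}^n}t^{x_n}$ by a direct lattice-point count: decompose $\mathbf{Z}^{n-1}$ into sup-norm shells, use the single-coordinate identity $\sum_{|x|\le m}t^{c|x|}=[m+1]_{t^c}+t^c[m]_{t^c}$, and telescope, with the $b=0$ case falling out by full decoupling. All steps check out (the rearrangement $\sum_m t^{bm}(h_m-h_{m-1})=(1-t^b)\sum_m t^{bm}h_m$ is valid as formal power series precisely because $b\ge 1$, as you note). Your argument is more elementary and self-contained, needing no external permutation-statistics input; and, as you observe, comparing your closed form against the specialization of \autoref{BDjVGd4o} \emph{derives} the relevant case of the Chow--Gessel identity rather than consuming it --- the exact reverse of the paper's logic. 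What the paper's route buys is the explicit link to $\comaj$ and $\des$ on $B_{n-1}$, which is the thematic point of Section~\ref{dxuwjrHR}. One minor caveat: as in \autoref{BDjVGd4o}, $b$ and $c$ must be integers for the cone to be rational and for your claim $\rho(x)\in\mathbf{Z}_{\ge 0}$ to hold; you use this implicitly.
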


	\begin{proof}
		In \autoref{BDjVGd4o}, we set $a_1= \cdots = a_{n-2}=c$ and $a_{n-1}=c+b$.
		Then $\Ehr_P(t) = f_C(t) \defeq f_C(1, \ldots, 1, t)$, the generating function of $C$ evaluated at $z_1, \ldots, z_{n-1} = 1$, $z_n = t$.
		
		For $b > 0$, the generating function $f_C(t)$ becomes
		\begin{align*}
			f_C(t) = & \frac{\sum_{(\pi,\epsilon) \in B_{n-1}}\prod_{j \in D(\pi,\epsilon)}t^{c(n-j)+b}}
			{(1-t)\prod_{j=1}^{n-1}(1-t^{c(n-j)+b})}\\
			= & \frac{\sum_{(\pi,\epsilon) \in B_{n-1}}(t^{c})^{\comaj(\pi,\epsilon)}(t^b)^{\des(\pi,\epsilon)}}
			{(1-t)\prod_{j=1}^{n-1}(1-(t^c)^jt^b)}.
		\end{align*}
		For $c\geq 1$, if $b=0$ then $f_C(t) = f_{P}(t^c)[c]_t$ where $P$ is the $(n-1)$-dimensional crosspolytope discussed in \autoref{TkEY25r3}; this can be seen by direct computation using \autoref{BDjVGd4o}, and is also a consequence of the fact that crosspolytopes are reflexive \cite[Chapter 4]{BeckRobins}.
		Otherwise, to simplify, we make use of a result of Chow and Gessel \cite[eq.\ (26)]{ChowGessel} to compute the joint distribution of descent and comajor index over $B_n$, namely
		\begin{equation}\label{chowgess}
			\sum_{k \geq 0}([k+1]_q+[k]_q)^n x^k =
			\frac{\sum_{(\pi,\epsilon) \in B_{n}} x^{\des(\pi,\epsilon)}
			q^{\maj(\pi,\epsilon)}}
			{\prod_{j=0}^{n}(1-xq^i)}.
		\end{equation}
		Observe that for $(\pi,\epsilon) \in B_n$, using \eqref{eqn:comajdesmaj},
		\[
			q^{\comaj(\pi,\epsilon)} = 
			(q^n)^{\des(\pi,\epsilon)}
			({1}/{q})^{\maj(\pi,\epsilon)}.
		\]
		Thus, substituting into \eqref{chowgess}, we get
		\begin{align*}
			\sum_{(\pi,\epsilon) \in B_{n}} x^{\des(\pi,\epsilon)}
			q^{\comaj(\pi,\epsilon)}  =&
			\sum_{(\pi,\epsilon) \in B_{n}} (xq^n)^{\des(\pi,\epsilon)}
			(1/q)^{\maj(\pi,\epsilon)}\\
			  =&
			\prod_{i=0}^n (1-xq^{n-i})
			\sum_{k \geq 0}([k+1]_{1/q}+[k]_{1/q})^n (xq^n)^k\\
			=&
			\prod_{i=0}^n (1-xq^{i})
			\sum_{k \geq 0}([k+1]_{q}+q[k]_{q})^n (x)^k .
		\end{align*}
		To get the numerator of $f_C(t)$, we set
		$n=n-1$, $x=t^b$ and $q=t^c$ in the last line above and get:
		\begin{align*}
			f_C(t) = &
			\frac{\prod_{i=0}^{n-1} (1-t^{ci+b})
			\sum_{k \geq 0}([k+1]_{t^c}+t^c[k]_{t^c})^{n-1} t^{bk}}
			{(1-t)\prod_{j=1}^{n-1}(1-t^{cj+b})}\\
			= & [b]_t \sum_{k \geq 0}([k+1]_{t^c}+t^c[k]_{t^c})^{n-1} t^{bk}. \qedhere
		\end{align*}
	\end{proof}

\subsection{Coefficients in arithmetic progression and lecture hall partitions} \label{rjvitVcP}

	If we further generalize the results of the previous subsections to allow $a_i$ to be a linear function of $i$, then $f_C(t)$ can be expressed in terms of lecture hall partitions.

	Lecture hall partitions, introduced by Bousquet-M\'elou and Eriksson \cite{BME1}, are elements of the set
	\[
		L_n
		= \{ \lambda \in \mathbf{Z}^n \mid   0 \leq \tfrac{\lambda_1}{1} \leq \tfrac{\lambda_2}{2} \leq \cdots \leq \tfrac{\lambda_n}{n} \} \, . 
	\]

	The following relationship between statistics on lecture hall partitions and statistics on signed permutations follows from work of Pensyl and Savage \cite{PS2}:
	\begin{equation}\label{eqn:ps}
		\sum_{\lambda \in L_n}
		x^{\left\lceil \frac{\lambda_n}{2n} \right\rceil}
		q^{\stat_1(\lambda)}
		y^{\stat_2(\lambda)}
		=
	 	\frac
		{\sum_{(\pi,\epsilon) \in B_{n}}q^{\comaj(\pi,\epsilon)}x^{\des(\pi,\epsilon)}(y^2)^{\cobin(\pi,\epsilon)}}
		{\prod_{i=0}^{n-1}(1-xq^{n-i}y^{2((i+1) + \cdots + n)})}
		\, ,
	\end{equation}
	where
	\[
		\stat_1(\lambda)
		= \sum_{i=1}^n \left\lceil \frac{\lambda_i}{2i} \right\rceil
		,	 \quad
		\stat_2(\lambda)
		= \sum_{i=1}^n 2i \left\lceil \frac{\lambda_i}{2i} \right\rceil
		,
	\]
	\[
		\cobin(\pi,\epsilon)
		= \sum_{j \in D(\pi,\epsilon)} (j + \cdots + n)
	\]
	for $\lambda \in L_n$, $(\pi,\epsilon) \in B_n$, and $\lceil x \rceil = \inf([x, \infty) \cap \mathbf{Z})$.
	
	We can apply \autoref{BDjVGd4o}, with $z_1= \cdots = z_{n-1}=1$, $z_n=t$ and appropriate choices of $a_i$, to establish a surprising connection between lecture hall partitions and type $B$ symmetrically constrained cones.
	
	\begin{corollary}
		Let $d \geq 0$, $c \geq -2d$, and $b \geq 0$, not all $0$.
		Define $a_1, \ldots, a_{n-1}$ by
		\[
			a_i = 2di + c
			\quad
			(i = 1,  \ldots, n-2),
			\quad
			a_{n-1}=2d(n-1)+c+b
			\, .
		\]
		Then
		\[
			f_C(t)
			=
			\frac{1}{1-t}
			\sum_{\lambda \in L_{n-1}}
			t^{\sum_{i=1}^{n-1} a_i \left\lceil \frac{\lambda_{i}}{2i} \right\rceil}
			\, .
		\]
	\end{corollary}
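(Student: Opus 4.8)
The plan is to combine \autoref{BDjVGd4o} with the lecture-hall identity \eqref{eqn:ps}, where the correct monomial substitution is forced by matching the three signed-permutation statistics $\des$, $\comaj$, and $\cobin$ against the prescribed linear form. First I would apply \autoref{BDjVGd4o} and specialize $z_1 = \cdots = z_{n-1} = 1$, $z_n = t$. Writing $\Sigma_j \defeq a_j + \cdots + a_{n-1}$ as in the proof of \autoref{BDjVGd4o}, the inner products collapse and, since $\Sigma_j$ is independent of $(\pi,\epsilon)$, the common denominator factors out and one obtains
\[
	f_C(t) = \frac{1}{1-t}\cdot \frac{\sum_{(\pi,\epsilon)\in B_{n-1}} \prod_{j\in D(\pi,\epsilon)} t^{\Sigma_j}}{\prod_{j=1}^{n-1}\bigl(1 - t^{\Sigma_j}\bigr)}\,.
\]
For the chosen coefficients one computes $\Sigma_j = 2d\sum_{m=j}^{n-1} m + c\,(n-j) + b$, the lone $b$ arising from $a_{n-1}$. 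I would also record that the hypotheses $0\le a_1\le\cdots\le a_{n-1}\ne 0$ of \autoref{BDjVGd4o} follow from $d\ge 0$, $c\ge -2d$, $b\ge 0$ not all zero, since $a_1 = 2d+c\ge 0$ and consecutive differences are $2d\ge 0$ (and $2d+b$ at the top).

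Next I would substitute $x = t^b$, $q = t^c$, $y = t^d$ into \eqref{eqn:ps} with $n$ replaced by $n-1$, reading it as an identity of rational functions in $t$. Writing $m_i \defeq \lceil \lambda_i/(2i)\rceil$, the left-hand side of \eqref{eqn:ps} becomes $\sum_{\lambda\in L_{n-1}} t^{\,b\,m_{n-1} + c\sum_i m_i + 2d\sum_i i\,m_i}$. Grouping the exponent as $\sum_{i=1}^{n-1}(2di+c)\,m_i + b\,m_{n-1}$ and recognizing that $a_i = 2di+c$ for $i<n-1$ while $a_{n-1}=2d(n-1)+c+b$, this is exactly $\sum_{\lambda\in L_{n-1}} t^{\sum_i a_i m_i}$, the target sum on the right of the claim.

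It then remains to match the right-hand side of \eqref{eqn:ps} with the single fraction displayed above. In the numerator, for fixed $(\pi,\epsilon)$ the contribution of each descent $j\in D(\pi,\epsilon)$ is $t^{c(n-j)+b+2d\sum_{m=j}^{n-1}m}=t^{\Sigma_j}$, assembled from the $\comaj$, $\des$, and $\cobin$ exponents respectively; summing over descents and over $B_{n-1}$ reproduces the numerator of $f_C(t)$. In the denominator, the factor indexed by $i\in\{0,\ldots,n-2\}$ has $t$-exponent $b + c(n-1-i) + 2d\sum_{m=i+1}^{n-1}m = \Sigma_{i+1}$, so under the reindexing $i\mapsto i+1$ the product $\prod_{i=0}^{n-2}$ becomes precisely $\prod_{j=1}^{n-1}\bigl(1-t^{\Sigma_j}\bigr)$. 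Hence the right-hand side of \eqref{eqn:ps} equals the $B_{n-1}$-fraction in $f_C(t)$, and chaining the three displays yields the asserted formula.

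I expect the main obstacle to be purely bookkeeping: correctly accounting for the shift between the rank-$n$ normalization of \eqref{eqn:ps} (statistics on $B_n$ and $L_n$) and the rank-$(n-1)$ objects appearing in $f_C$, and verifying that the three free parameters $b$, $c$, $d$ align with $\des$, $\comaj$, and $\cobin$ (equivalently with the $x$-, $q$-, and $y$-gradings, where the identity \eqref{eqn:comajdesmaj} governs the passage between $\maj$ and $\comaj$). A secondary point warranting a remark is the legitimacy of the monomial substitution when $c<0$: this is harmless because \eqref{eqn:ps} holds as an identity of rational functions, and the constraint $c\ge -2d$ guarantees that every $\Sigma_j$ and every $\sum_i a_i m_i$ is nonnegative, so both sides are genuine power series in $t$.
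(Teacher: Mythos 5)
Your proposal is correct and follows essentially the same route as the paper's proof: specialize \autoref{BDjVGd4o} at $z_1=\cdots=z_{n-1}=1$, $z_n=t$, recognize the exponents $a_j+\cdots+a_{n-1}$ as $b\cdot\des+c\cdot\comaj+2d\cdot\cobin$ contributions, and invoke \eqref{eqn:ps} with $x=t^b$, $q=t^c$, $y=t^d$ to convert the signed-permutation sum into the lecture-hall sum. Your added checks (that the normalization hypotheses of \autoref{BDjVGd4o} hold and that negative $c$ is harmless because every $\Sigma_j$ stays nonnegative) are sound housekeeping that the paper leaves implicit.
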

	
	\begin{proof}
		Observe that for $1 \leq i \leq n-2$,
		substituting the values of $a_i$ into \autoref{BDjVGd4o} with $z_1= \cdots = z_{n-1}=1$, $z_n=t$,
		and then using
		\eqref{eqn:ps}, we obtain
		\begin{align*}
			f_C(t)
			&= 
			\frac{\sum_{(\pi,\epsilon) \in B_{n-1}} \prod_{j \in D(\pi,\epsilon)} t^{a_j+ \cdots + a_{n-1}}}
			{(1-t)\prod_{j=1}^{n-1}(1-t^{a_j + \cdots + a_{n-1}})}
			\\ &=
			\frac{\sum_{(\pi,\epsilon) \in B_{n-1}} \prod_{j \in D(\pi,\epsilon)} t^{b+(n-j)c+2d(j + \cdots + (n-1))}}
			{(1-t)\prod_{i=1}^{n-1}(1-t^{(b+c(n-i)+2d(i + \cdots + (n-1))}}
			\\ &=
			\frac
			{\sum_{(\pi,\epsilon) \in B_{n-1}} (t^c)^{\comaj(\pi,\epsilon)}(t^b)^{\des(\pi,\epsilon)}(t^{2d})^{\cobin(\pi,\epsilon)}}
			{(1-t)\prod_{i=1}^{n-1}(1-t^{(b+c(n-i)+2d(i + \cdots + (n-1))}}
			\\ &=
			\frac
			{\sum_{(\pi,\epsilon) \in B_{n-1}}(t^c)^{\comaj(\pi,\epsilon)}(t^b)^{\des(\pi,\epsilon)}(t^{2d})^{\cobin(\pi,\epsilon)}}
			{(1-t)\prod_{i=1}^{n-1}(1-t^{(b+c(n-i)+2d(i + \cdots + (n-1))}}
			\\ &=
			\frac{1}{1-t}
			\sum_{\lambda \in L_{n-1}}
			t^{b \left \lceil \frac{\lambda_{n-1}}{2n-2} \right\rceil + c \stat_1(\lambda) + d \stat_2(\lambda)}
			\\ &=
			\frac{1}{1-t}
			\sum_{\lambda \in L_{n-1}}
			t^{\sum_{i=1}^{n-1} a_i \left\lceil \frac{\lambda_{i}}{2i} \right\rceil} \, . \qedhere
		\end{align*}
	\end{proof}
	
	As an example, let $n=3$, $a_1=2$ and $a_2=4$.
	Then
	\[
		C = \{ x \in \mathbf{R}^3 \mid {}
		\forall \, \pi \in S_{2},\ \epsilon \in \{\pm 1 \}^2 :
		2 \epsilon_1 x_{\pi(1)}  + 4 \epsilon_{2} x_{\pi(2)} \leq x_3 \} \, ,
	\]
	and from \autoref{BDjVGd4o},
	\begin{align*}
		\sum_{x \in C} t^{x_3}
		&= \frac{1+3t^3+3t^6+t^{10}}{(1-t)(1-t^6)(1-t^4)} \\
		&= 1+t+t^2+t^3+5t^4+5t^5+9t^6+9t^7+13t^8+13t^9+\cdots.
	\end{align*}
	On the other hand, checking the corollary, we have
	\begin{align*}
		\frac{1}{1-t} \sum_{\lambda \in L_2} t^{2 \left\lceil \frac{\lambda_1}{2} \right\rceil + 4 \left\lceil \frac{\lambda_2}{4} \right\rceil}
		&= \frac{1+4t^4+4t^6+4t^8+8t^{10}+8t^{12}+8t^{14} + \cdots}{1-t}\\
		&= \begin{aligned}[t]&1+t+t^2+t^3+5t^4+5t^5+9t^6\\&{+}\;9t^7+13t^8+13t^9+\cdots.\end{aligned}
	\end{align*}

\section{Cones with symmetry of type $D$} \label{XC4vSQKD}

	In this section, we consider the case of monoconditional cones with symmetry given by a Coxeter group of type $D$.
	Unsurprisingly, much of the setup in this section is similar to the hyperoctahedral case; the most notable new feature is that we consider lattice point enumeration with respect to a sublattice of the standard integer lattice.

	Throughout this section, let $W$ be the finite reflection group of type $D_{n-1}$ on the first $n-1$ components of $\mathbf{R}^n$.
	Specifically, let $s_1 \in \mathrm{O}(n)$ be the reflection at the hyperplane $\{ x \in \mathbf{R}^n \mid x_1 + x_2 = 0 \}$.
	For $j = 2, \ldots, n-1$, let $s_j \in \mathrm{O}(n)$ be the transposition of the $(j-1)$st and $j$th component in $\mathbf{R}^n$.
	Then $s_1, \ldots, s_{n-1}$ are the simple generators of $W$.

	 We next describe $\Dr(\sigma)$ and the action of $W$ explicitly.
	Let
	\[
		E_{n-1} \defeq \{ \epsilon \in \{\pm 1\}^{n-1} \mid \epsilon_1 \cdots \epsilon_{n-1} = 1 \} \, .
	\]
	For $\pi \in S_{n-1}$ and $\epsilon \in E_{n-1}$, define $\sigma_{\pi,\epsilon} \in \mathrm{O}(n)$ by $\sigma_{\pi,\epsilon}e_i = \epsilon_i e_{\pi(i)}$ for $i < n$ and $\sigma_{\pi,\epsilon}e_n = e_n$.
	Then $W = \{ \sigma_{\pi,\epsilon} \mid \pi \in S_{n-1},$ $\epsilon \in E_{n-1} \}$.
	We use the convention that $\pi(n) \defeq n$ for $\pi \in S_{n-1}$ and $\epsilon_n \defeq 1$ for $\epsilon \in E_{n-1}$.

	For $\pi \in S_{n-1}$ and $\epsilon \in E_{n-1}$ let
	\[
		D(\pi, \epsilon) \defeq \{ j \in \{ 1, \ldots, n-1 \} \mid \epsilon_{j-1} \pi(j-1) > \epsilon_j \pi(j) \}
	\]
	with the convention that $\epsilon_0 \pi(0) \defeq -\epsilon_2 \pi(2)$.

	\begin{proposition}[{\cite[Proposition 8.2.2]{BjornerBrenti}}]\label{prop:typeDdes}
	For all $\sigma_{\pi,\epsilon}\in W$ we have
	\[
		\Dr(\sigma_{\pi,\epsilon}) = D(\pi,\epsilon) \, .
	\]
	\end{proposition}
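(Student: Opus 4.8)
The plan is to reduce the claim to the standard root-theoretic criterion for right descents and then carry out an explicit sign computation. Recall that for a finite reflection group with the chosen fundamental domain $F$ and associated simple reflections $s_1,\ldots,s_{n-1}$, each $s_j$ has a simple root $\alpha_j$, namely the inward-pointing normal to the wall of $F$ fixed by $s_j$, and one has
\[
	j \in \Dr(\sigma) \iff \ell(\sigma s_j) < \ell(\sigma) \iff \sigma \alpha_j \in \Phi^- ,
\]
where $\Phi^-$ is the set of negative roots determined by $F$ (see \cite[\S 5.7]{Humphreys}); this is exactly the same bookkeeping of reflection hyperplanes separating $F$ from $\sigma F$ that was already used in the proof of \autoref{8UQHag7u}. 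It therefore suffices to identify the simple roots $\alpha_j$, record which roots are positive with respect to $F$, and evaluate $\sigma_{\pi,\epsilon}\alpha_j$ in each case.

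First I would pin down the fundamental domain and the simple roots. The walls fixed by $s_1,\ldots,s_{n-1}$ bound
\[
	F = \{ x \in \mathbf{R}^n \mid -x_2 \leq x_1 \leq x_2 \leq \cdots \leq x_{n-1} \} ,
\]
whose interior satisfies $0 < x_2 < \cdots < x_{n-1}$ and $\lvert x_1 \rvert < x_2$. Reading off the inward normals gives $\alpha_1 = e_1 + e_2$ (from the wall $x_1 + x_2 = 0$) and $\alpha_j = e_j - e_{j-1}$ for $j = 2, \ldots, n-1$ (from $x_{j-1} = x_j$). The step most likely to trip one up is the orientation: because $F$ is \emph{increasing} in the coordinates, the simple roots along the type-$A$ part are $e_j - e_{j-1}$ rather than $e_{j-1} - e_j$, and the fork node contributes $e_1 + e_2$ rather than $e_1$. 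Pairing any root $\pm e_i \pm e_j$ against a point in the interior of $F$ then shows that $e_i + e_j$ is always positive, while $e_i - e_j$ is positive exactly when $i > j$.

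With these conventions fixed, the remainder is a case analysis on the signs $\epsilon_{j-1},\epsilon_j$. For $j \geq 2$ we have $\sigma_{\pi,\epsilon}\alpha_j = \epsilon_j e_{\pi(j)} - \epsilon_{j-1} e_{\pi(j-1)}$, and checking the four sign patterns against the positivity rule above shows in every case that this is a negative root precisely when $\epsilon_{j-1}\pi(j-1) > \epsilon_j\pi(j)$, which is exactly the defining condition for $j \in D(\pi,\epsilon)$. For the fork node $j = 1$ we have $\sigma_{\pi,\epsilon}\alpha_1 = \epsilon_1 e_{\pi(1)} + \epsilon_2 e_{\pi(2)}$, and since $e_i + e_j$ is negative only when both coefficients are negative and $e_i - e_j$ is negative for $i < j$, this is a negative root precisely when $\epsilon_1\pi(1) + \epsilon_2\pi(2) < 0$, i.e. when $-\epsilon_2\pi(2) > \epsilon_1\pi(1)$. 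This is exactly the condition $\epsilon_0\pi(0) > \epsilon_1\pi(1)$ under the stated convention $\epsilon_0\pi(0) := -\epsilon_2\pi(2)$, and recognizing that the unusual convention is forced by the fork root $e_1 + e_2$ is the conceptual crux of the argument. Comparing both computations with the definition of $D(\pi,\epsilon)$ term by term yields $\Dr(\sigma_{\pi,\epsilon}) = D(\pi,\epsilon)$. (The hypothesis $\epsilon \in E_{n-1}$ enters only in guaranteeing $\sigma_{\pi,\epsilon} \in W$; the descent formula itself is insensitive to it.)
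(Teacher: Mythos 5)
Your proof is correct, but note that the paper does not actually prove this proposition: it is stated with a citation to Bj\"orner--Brenti (Proposition 8.2.2) and used as a black box, so any complete argument is necessarily ``different from the paper's.'' What you supply is a self-contained verification via the standard root-theoretic characterization of right descents, $l(\sigma s_j) < l(\sigma) \iff \sigma\alpha_j \in \Phi^-$, and the details check out: the fundamental domain $\lvert x_1\rvert \le x_2 \le \cdots \le x_{n-1}$ gives simple roots $\alpha_1 = e_1+e_2$ and $\alpha_j = e_j - e_{j-1}$ for $j \ge 2$; the positive roots are exactly $e_i+e_j$ ($i\ne j$) and $e_i - e_j$ ($i>j$); and the four-way sign analysis for $j\ge 2$, as well as the fork-node computation $\sigma_{\pi,\epsilon}\alpha_1 = \epsilon_1 e_{\pi(1)}+\epsilon_2 e_{\pi(2)}$, reduce in every case to the inequality $\epsilon_{j-1}\pi(j-1) > \epsilon_j\pi(j)$ with the convention $\epsilon_0\pi(0) = -\epsilon_2\pi(2)$. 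Your remark that this seemingly ad hoc convention is forced by the fork root $e_1+e_2$ is exactly the right conceptual point, and your parenthetical about $E_{n-1}$ is also accurate: the parity condition only ensures $\sigma_{\pi,\epsilon}\in W$, while the sign computation is insensitive to it. One small advantage of your direct computation over the citation is that it verifies the statement for the specific geometric realization and simple system chosen in this paper (where the group acts on the first $n-1$ coordinates of $\mathbf{R}^n$ and the descent indexing is shifted by one), so no translation between conventions is left implicit.
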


	For a proposition $P$, we use the symbol
	\[
		[P] \defeq \begin{cases}
			1 & \text{if $P$ is true,} \\
			0 & \text{if $P$ is false.}
		\end{cases}
	\]

	\begin{proposition}\label{prop:typeD}
		Fix integers $a_1, \ldots, a_{n-1}$ such that $\lvert a_1 \rvert \leq a_2 \leq \cdots \leq a_{n-1} \neq 0$.
		Let
		\[ \begin{split}
			C \defeq \{ x \in \mathbf{R}^n \mid {}
			& \forall \, \pi \in S_{n-1},\ \epsilon \in E_{n-1} : \\
			& \epsilon_1 a_1 x_{\pi(1)} + \cdots + \epsilon_{n-1} a_{n-1} x_{\pi(n-1)} \leq x_n \} \, .
		\end{split} \]
		Let
		\[
			\Gamma \defeq
			\{
				x \in \mathbf{Z}^n \mid x_1 \equiv \cdots \equiv x_{n-1} \mod (2)
			\} .
		\]
		The generating function of $C$ with respect to $\Gamma$ is
		\[
			f_C = \frac{1}{1-z_n} \sum_{\pi \in S_{n-1}} \sum_{\epsilon \in E_{n-1}} \frac{
				\prod_{j \in D(\pi,\epsilon)}
					\left( z_{\pi(1)}^{-\epsilon_1} z_n^{-a_1} \right)^{[j=2]}
					\left( \prod_{i=j}^{n-1} z_{\pi(i)}^{\epsilon_i} z_n^{a_i} \right)^{1 + [j \geq 3]}
			}{
				\prod_{j=1}^{n-1} \left(
					1 - \left( z_{\pi(1)}^{-\epsilon_1} z_n^{-a_1} \right)^{[j=2]}
					\left( \prod_{i=j}^{n-1} z_{\pi(i)}^{\epsilon_i} z_n^{a_i} \right)^{1 + [j \geq 3]}
				\right)
			} \, ,
		\]
		where $z_1, \ldots, z_n$ are the coordinates corresponding to the standard lattice $\mathbf{Z}^n \subset \mathbf{R}^n$.
	\end{proposition}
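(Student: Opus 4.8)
The plan is to follow the template of the type-$A$ and type-$B$ proofs (Propositions \ref{BPEevVg3} and \ref{BDjVGd4o}), applying \autoref{vbvXkhNC} to the Coxeter group $W$ of type $D_{n-1}$, with the one essential new feature that $C_+$ fails to be unimodular with respect to $\mathbf{Z}^n$ and one must pass to the sublattice $\Gamma$. First I would record that $C$ is a monoconditional symmetric cone for $W$: taking the linear form $\phi$ with $\langle x, \phi \rangle = x_n - \sum_{i=1}^{n-1} a_i x_i$, so that $x_\phi = (a_1, \ldots, a_{n-1}, -1)$, one uses $\langle \sigma x, \phi \rangle = -(x, \sigma x_\phi)$ together with $\sigma_{\pi,\epsilon} x_\phi = \sum_{i<n} \epsilon_i a_i e_{\pi(i)} - e_n$ to see that the system $\langle \sigma x, \phi \rangle \geq 0$ is exactly the defining family of $C$; since $\langle e_n, \phi \rangle = 1$ and $a_{n-1} \neq 0$, both $V_2$ and $V_1$ meet the complement of $\ker \phi$. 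With the fundamental domain $F = \{ x \mid |x_1| \leq x_2 \leq \cdots \leq x_{n-1} \}$, whose $n-1$ facet hyperplanes are $x_1 + x_2 = 0$ and $x_{j-1} = x_j$ (corresponding to the chosen $s_1, \ldots, s_{n-1}$), the hypothesis $|a_1| \leq a_2 \leq \cdots \leq a_{n-1}$ places $x_0 \defeq x_\phi$ in $F$, so by the proof of \autoref{8UQHag7u} one gets $C_+ = \{ x \in F \mid (x, x_0) \leq 0 \} = \{ x \mid Ax \geq 0 \}$, where $A$ has rows $x_1 + x_2$, $x_2 - x_1$, $x_j - x_{j-1}$ (for $3 \leq j \leq n-1$), and $x_n - \sum_{i<n} a_i x_i$.

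The crux is the lattice computation. I would first compute $\det A = 2$: the type-$D$ branch node (the opening $2 \times 2$ block built from $s_1, s_2$) contributes a factor $2$, and the rest is triangular with unit diagonal. Thus $C_+$ is \emph{not} unimodular over $\mathbf{Z}^n$. Inverting $A$ produces the extreme-ray generators $b_j = A^{-1} e_j$, namely $b_1 = \tfrac12(1, \ldots, 1, \Sigma_1)$, $b_2 = \tfrac12(-1, 1, \ldots, 1, -a_1 + \Sigma_2)$, $b_j = (0, \ldots, 0, 1, \ldots, 1, \Sigma_j)$ for $3 \leq j \leq n-1$ (ones in positions $j, \ldots, n-1$), and $b_n = e_n$, with $\Sigma_j \defeq a_j + \cdots + a_{n-1}$. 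None of $b_1, \ldots, b_{n-1}$ lies in $\Gamma$, and the heart of the argument is to identify the $\Gamma$-primitive generators: scaling gives $\tilde b_j \defeq 2 b_j$ for $j < n$ (this moves $b_1, b_2$ onto all-odd vectors and $b_3, \ldots, b_{n-1}$ onto all-even vectors, so all land in $\Gamma$, and $2$ is minimal because each ray mixes even and odd first coordinates) while $\tilde b_n \defeq b_n = e_n \in \Gamma$ is already primitive. To confirm these form a $\mathbf{Z}$-basis of $\Gamma$, hence that $C_+$ \emph{is} unimodular with respect to $\Gamma$, I would check $|\det(\tilde b_1, \ldots, \tilde b_n)| = 2^{n-1} \, |\det A^{-1}| = 2^{n-2}$, which equals the index $[\mathbf{Z}^n : \Gamma] = 2^{n-2}$; since each $\tilde b_j \in \Gamma$ and the covolumes agree, they generate $\Gamma$. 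One also notes that $\Gamma$ is $W$-invariant (permutations and sign changes preserve parity), so $W$ is crystallographic for $\Gamma$, and that $\tilde b_j \notin H_j$ for $j < n$, matching the enumeration required by \autoref{vbvXkhNC}.

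With unimodularity over $\Gamma$ established, I would invoke \autoref{vbvXkhNC} with primitive generators $\tilde b_1, \ldots, \tilde b_n$. Because the $W$-invariant lattice $\Gamma$ sits inside $\mathbf{Z}^n$, each $\sigma \tilde b_j$ has integer standard coordinates, so setting $z_i = e^{-\langle e_i, \phi \rangle}$ turns $e^{-\langle \sigma \tilde b_j, \phi \rangle}$ into the monomial $z^{\sigma \tilde b_j}$ in the standard coordinates, exactly as the statement demands. Using $\sigma_{\pi,\epsilon} e_i = \epsilon_i e_{\pi(i)}$ for $i < n$ and $\sigma_{\pi,\epsilon} e_n = e_n$, a direct expansion gives $z^{\sigma \tilde b_n} = z_n$ and, for $1 \leq j \leq n-1$,
\[
	z^{\sigma_{\pi,\epsilon} \tilde b_j} = \left( z_{\pi(1)}^{-\epsilon_1} z_n^{-a_1} \right)^{[j=2]} \left( \prod_{i=j}^{n-1} z_{\pi(i)}^{\epsilon_i} z_n^{a_i} \right)^{1 + [j \geq 3]} ,
\]
where the three cases $j = 1$, $j = 2$, and $j \geq 3$ record respectively the shapes of $\tilde b_1$, $\tilde b_2$, and the doubled generators $\tilde b_j$. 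Replacing $\Dr(\sigma_{\pi,\epsilon})$ by $D(\pi,\epsilon)$ via \autoref{prop:typeDdes}, and pulling the $j = n$ factor $1/(1-z_n)$ out of the denominator (note $n \notin D(\pi,\epsilon)$), then yields the claimed formula. The main obstacle is the second paragraph: correctly pinning down the $\Gamma$-primitive generators — recognizing that the type-$D$ branch node forces $b_1, b_2$ to be half-integral while $b_3, \ldots, b_{n-1}$ are integral but of mixed parity — and verifying unimodularity against the \emph{sublattice} $\Gamma$ rather than $\mathbf{Z}^n$.
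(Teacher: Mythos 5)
Your proposal is correct and follows essentially the same route as the paper: identify $C_+=\{x\mid Ax\geq 0\}$ via the fundamental domain $F=\{x\mid \lvert x_1\rvert\leq x_2\leq\cdots\leq x_{n-1}\}$, obtain the $\Gamma$-primitive generators by doubling the first $n-1$ columns of $A^{-1}$, verify unimodularity with respect to $\Gamma$ by comparing $\det B=2^{n-2}$ with $\lvert\mathbf{Z}^n/\Gamma\rvert$, and then expand $z^{\sigma_{\pi,\epsilon} b_j}$ using \autoref{jx6Xq2Cz} and \autoref{prop:typeDdes}. The only cosmetic difference is that the paper writes down the matrix $B$ of primitive generators directly rather than deriving it by rescaling the rays, and your case analysis of the resulting monomials matches the paper's exactly.
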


	Note that the conditions on the $a_i$ are normalizations rather than restrictions.

	\begin{proof}
		The cone $C$ is symmetric and monoconditional for $W$.
		Let
		\[
			F \defeq \{ x \in \mathbf{R}^n \mid \lvert x_1 \rvert \leq x_2 \leq \cdots \leq x_{n-1} \} \, ,
		\]
		a fundamental domain for $W$.
		Our simple generators $s_1, \ldots, s_{n-1}$ defined at the beginning of this section are the simple generators of $W$ corresponding to the facets of $F$.
		Let $x_0 \defeq (a_1, \ldots, a_n, -1) \in F$.
		By the proof of \autoref{8UQHag7u},
		\begin{align*}
			C_+
			&= \{ x \in F \mid (x, x_0) \leq 0 \}
			= \{ x \in F \mid a_1 x_1 + \cdots + a_{n-1} x_{n-1} \leq x_n \} \\
			&= \{ x \in \mathbf{R}^n \mid Ax \geq 0 \} \, ,
		\end{align*}
		where
		\[
			A \defeq \begin{pmatrix} \begin{tikzpicture}[x=(-90:1.5em),y=(0:2.5em),dash pattern=on 0pt off 1ex,line cap=round,very thick]
				\draw (2,2) node[fill=white][fill=white]{$1$} -- (5,5) node[fill=white]{$1$};
				\draw (2,1) node[fill=white]{$-1$} -- (5,4) node[fill=white]{$-1$};
				\draw (1,1) node[fill=white]{$1$};
				\draw (1,2) node[fill=white]{$1$};
				\draw (1,3) node[fill=white]{$0$} -- (1,6) node[fill=white]{$0$};
				\draw (2,3) node[fill=white]{$0$} -- (2,6) node[fill=white]{$0$} -- (5,6) node[fill=white]{$0$} -- cycle;
				\draw (3,1) node[fill=white]{$0$} -- (5,1) node[fill=white]{$0$} -- (5,3) node[fill=white]{$0$} -- cycle;
				\draw (6,1) node[fill=white]{$-a_1$} -- (6,5) node[fill=white]{$-a_{n-1}$};
				\draw (6,6) node[fill=white]{$1$};
			\end{tikzpicture} \end{pmatrix} .
		\]
		The inverse of $A$ is
		\[
			A^{-1} = \begin{pmatrix} \begin{tikzpicture}[x=(-90:1.5em),y=(0:2.5em),dash pattern=on 0pt off 1ex,line cap=round,very thick]
				\draw (1,1) node[fill=white]{$1/2$};
				\draw (1,2) node[fill=white]{$-1/2$};
				\draw (2,1) node[fill=white]{$1/2$} -- (5,1) node[fill=white]{$1/2$};
				\draw (2,2) node[fill=white]{$1/2$} -- (5,2) node[fill=white]{$1/2$};
				\draw (3,3) node[fill=white]{$1$} -- (5,3) node[fill=white]{$1$} -- (5,5) node[fill=white]{$1$} -- cycle;
				\draw (1,3) node[fill=white]{$0$} -- (1,6) node[fill=white]{$0$};
				\draw (2,3) node[fill=white]{$0$} -- (2,6) node[fill=white]{$0$} -- (5,6) node[fill=white]{$0$} -- cycle;
				\draw (6,1) node[fill=white]{$\Sigma_1/2$};
				\draw (6,2) node[fill=white]{$\Sigma_2'/2$};
				\draw (6,3) node[fill=white]{$\Sigma_3$} -- (6,5) node[fill=white]{$\Sigma_{n-1}$};
				\draw (6,6) node[fill=white]{$1$};
			\end{tikzpicture} \end{pmatrix} ,
		\]
		where $\Sigma_j \defeq a_j + \cdots + a_{n-1}$ and $\Sigma_2' \defeq \Sigma_2 - a_1$.
		Hence the $\Gamma$-primitive generators of $C_+$ are the column vectors $b_1, \ldots, b_n$ of the matrix
		\[
			B \defeq \begin{pmatrix} \begin{tikzpicture}[x=(-90:1.5em),y=(0:2.5em),dash pattern=on 0pt off 1ex,line cap=round,very thick]
				\draw (1,1) node[fill=white]{$1$};
				\draw (1,2) node[fill=white]{$-1$};
				\draw (2,1) node[fill=white]{$1$} -- (5,1) node[fill=white]{$1$};
				\draw (2,2) node[fill=white]{$1$} -- (5,2) node[fill=white]{$1$};
				\draw (3,3) node[fill=white]{$2$} -- (5,3) node[fill=white]{$2$} -- (5,5) node[fill=white]{$2$} -- cycle;
				\draw (1,3) node[fill=white]{$0$} -- (1,6) node[fill=white]{$0$};
				\draw (2,3) node[fill=white]{$0$} -- (2,6) node[fill=white]{$0$} -- (5,6) node[fill=white]{$0$} -- cycle;
				\draw (6,1) node[fill=white]{$\Sigma_1$};
				\draw (6,2) node[fill=white]{$\Sigma_2'$};
				\draw (6,3) node[fill=white]{$2\Sigma_3$} -- (6,5) node[fill=white]{$2\Sigma_{n-1}$};
				\draw (6,6) node[fill=white]{$1$};
			\end{tikzpicture} \end{pmatrix} .
		\]
		As $\det(B) = 2^{n-2} = \lvert \mathbf{Z}^n / \Gamma \rvert$ it follows that $C_+$ is unimodular.
		Note that $b_1, \ldots, b_n$ are enumerated in the unique way such that $b_j \not\in H_j$ for $j < n$, where $H_j$ is the reflection hyperplane for the reflection $s_j$.
		Hence by \autoref{jx6Xq2Cz} the generating function of $C$ is
		\[
			f_C = \sum_{\sigma \in W} \frac{\prod_{j \in \Dr(\sigma)} z^{\sigma b_j}}{(1-z^{\sigma b_1}) \cdots (1-z^{\sigma b_n})} \, .
		\]
		Let
		\[
			b_{ij} \defeq \begin{cases}
				-1 & \text{if } i = 1,\ j = 2, \\
				1 & \text{if } j \leq i < n \text{ or } i = j = n, \\
				0 & \text{if } i < j \geq 2, \\
				\Sigma_1 & \text{if } i = n,\ j = 1, \\
				\Sigma_2' & \text{if } i = n,\ j = 2, \\
				2\Sigma_j & \text{if } i = n,\ 2 < j < n
			\end{cases}
		\]
		be the $i$th component of $b_j$, i.e., the $(i,j)$th component of $B$.	
		Thus
		\begin{align*}
			f_C
			&= \sum_{\sigma \in W} \frac{\prod_{j \in \Dr(\sigma)} z^{\sigma b_j}}{\prod_{j=1}^n (1-z^{\sigma b_j})} \\
			&= \sum_{\pi \in S_{n-1}} \sum_{\epsilon \in E_{n-1}} \frac{\prod_{j \in \Dr(\sigma_{\pi,\epsilon})} z^{\sigma_{\pi,\epsilon} b_j}}{\prod_{j=1}^n (1-z^{\sigma_{\pi,\epsilon} b_j})} \\
			&= \sum_{\pi \in S_{n-1}} \sum_{\epsilon \in E_{n-1}} \frac{\prod_{j \in D(\pi, \epsilon)} \prod_{i=1}^n z_{\pi(i)}^{\epsilon_i b_{ij}}}{\prod_{j=1}^n \left(1 - \prod_{i=1}^n z_{\pi(i)}^{\epsilon_i b_{ij}} \right)} \\
			&= \frac{1}{1-z_n} \sum_{\pi \in S_{n-1}} \sum_{\epsilon \in E_{n-1}} \frac{
				\prod_{j \in D(\pi,\epsilon)}
					\left( z_{\pi(1)}^{-\epsilon_1} z_n^{-a_1} \right)^{[j=2]}
					\left( \prod_{i=j}^{n-1} z_{\pi(i)}^{\epsilon_i} z_n^{a_i} \right)^{1 + [j \geq 3]}
			}{
				\prod_{j=1}^{n-1} \left(
					1 - \left( z_{\pi(1)}^{-\epsilon_1} z_n^{-a_1} \right)^{[j=2]}
					\left( \prod_{i=j}^{n-1} z_{\pi(i)}^{\epsilon_i} z_n^{a_i} \right)^{1 + [j \geq 3]}
				\right)
			} \, .
			\qedhere
		\end{align*}
	\end{proof}

\bibliographystyle{amsplain}
\bibliography{symmetriccones}

\end{document}